\newcommand{\pa}{\partial}
\newcommand{\ep}{\varepsilon}
\newcommand{\La}{\Lambda}
\newcommand{\vp}{\varphi}
\newcommand{\SR}{Special Relativity}
\newcommand{\RR}{{\mathbb{R}}}
\newcommand{\cale}{\mathscr E}
\newcommand{\calm}{\mathscr M}
\newcommand{\vecteur}[1]{\overrightarrow{#1}}
\newcommand{\ee}{\mathbf e}
\newcommand{\uu}{\mathbf u}
\newcommand{\nn}{\mathbf n}
\newcommand{\bb}{\mathbf b}
\newcommand{\cc}{\mathbf c}
\newcommand{\BB}{\mathbf B}
\newcommand{\EE}{\mathbf E}
\newcommand{\FF}{\mathbf F}
\newtheorem{Theorem}{Theorem}
\newtheorem{Definition}{{Definition}}
\newtheorem{Remark}{Remark}
\begin{document}

\title{The Relativistic Rotation Transformation and the Observer Manifold}
\address{Laboratoire de Math\'ematiques de Reims (CNRS, UMR9008), Universit\'e de Reims Champagne-Ardenne, Moulin de la Housse, B.P. 1039, F-51687 Reims Cedex 2, France and
GREI (EPHE/PSL and Sorbonne-Universit\'e, Paris), 14 Cours des Humanit\'es, F-93322 Aubervilliers {Cedex}, France}
\author{Satyanad Kichenassamy}\email{satyanad.kichenassamy@univ-reims.fr}

\maketitle

\markboth{S. Kichenassamy, \emph{Axioms} 2023, 12(12), 1066}{S. Kichenassamy, \emph{Axioms} 2023, 12(12), 1066}

\bigskip
\textbf{Appeared in:} \emph{Axioms} 2023, 12(12),1066 \\ (\url{https://doi.org/10.3390/axioms12121066})
\medskip

\begin{quote}\scriptsize
\textbf{Abstract. }We show that relativistic rotation transformations represent transfer maps between the laboratory system and a local observer on an observer manifold, rather than an event manifold, in the spirit of C-equivalence. Rotation is, therefore, not a parameterised motion on a background space or spacetime, but is determined by a particular sequence of tetrads related by specific special Lorentz transformations or boosts. Because such Lorentz boosts do not form a group, these tetrads represent distinct observers that cannot put together their local descriptions into a manifold in the usual sense. The choice of observer manifold depends on the dynamical situation under consideration, and is not solely determined by the kinematics. Three examples are given: Franklin's rotation transformation for uniform plane rotation, the Thomas precession of a vector attached to an electron, and the motion of a charged particle in an electromagnetic field. In each case, at each point of its trajectory, there is a distinguished tetrad and a special Lorentz transformation that maps Minkowski space to the spacetime of the local observer on the curve.

\end{quote}

\textbf{Keywords: } {axiomatics of manifolds; observer manifold; Lorentz transformation; rotation; Special Relativity; General Relativity; charged particles; Thomas precession}

\textbf{PACS:}{04.20.Cv; 04.80.Cc; 03.30.+p; 52.20.Dq}

\section{Introduction}

This paper continues a recent work~\cite{SK-23-observer}, which built upon an elaborate discussion of the equivalence principle in the 1960s, initiated in the Princeton school; see
~\cite{K-cr-64a,K-ihp-ceq,K-ihp-ceq-n,C-eq20}. We showed that a proper axiomatisation of these considerations, in the framework of Hilbert's Sixth Problem, required an extension of the manifold concept. One should mathematically distinguish the events that observers measure and the setups that each observer uses to actually perform measurement. The setup of an observer is encapsulated in the metric of a Lorentzian manifold. We are interested here in simple situations, which all involve uniform rotation, in which this identification of events and observers is not appropriate. In all these cases, the event manifold is a Minkowski space, which may be thought of as the system attached to a laboratory using a standard measuring apparatus. The event manifold is, therefore, associated with a distinguished observer: the one actually recording the positions of various particles, each of which has its own system. By contrast, other observers will be attached to different points of the trajectories of particles in motion; it is not feasible to assume that these observers are attached to a measurement apparatus, let alone a human~observer.

While the successes of General Relativity made it reasonable to give the set of events the structure of a Lorentzian manifold on which Einstein's field equations are valid, this event manifold is constructed by a single observer or, more precisely, by a class of observers whose observations are coordinated. Therefore, if we postulate that all observers make measurements in the same way, \emph{{we should assign to every observer a different manifold}}.
 The equivalence of these manifolds, which holds in \SR, has no reason to be valid in more-general situations. The observer manifold is the collection of all these manifolds, together with axioms specifying their relation to the event manifold. These observers could also be human observers with a measuring apparatus, but this is an unrealistic assumption in general.

Therefore, the observer manifold is not a manifold in the usual sense, but may be thought of as a collection of manifolds, each of which represents the current state of the system dragged by the particle under consideration, along its path. This path is observed in the laboratory system as a sequence of events, which trace out a trajectory. In Special Relativity, inertial observers \emph{can} be identified to one another and may be viewed as embedded in a Minkowski space (\cite{SK-23-observer}, Theorem 1). However, in other cases, it is necessary to consciously distinguish the event and observer manifolds. We are interested here in three fairly explicit examples of observer manifolds that may not be identified with the event manifold. All the examples in this paper are related to systems or particles in rotation, since this is one of the simplest sources of non-inertial behaviour. These three examples, therefore, provide the beginnings of a mathematical classification of observer manifolds.

The difficulty of the problem of rotation in Relativity stems from the inadequacy of what would be the natural approach in Newtonian Mechanics, namely to try and generalise a transformation, in cylindrical coordinates $(r,\theta,z,t)$, of the form $r'=r$, $\theta'=\theta-\omega t$, $z'=z$, where $\omega$ is a constant. Even if it were modified and supplemented by a suitable transformation of the time variable, such as the one proposed in 1922 by Franklin~\cite{franklin} (see~\eqref{eq:RRT} below) or others~\cite{langevin-21,langevin-37,post}, this would merely yield a change of coordinates. Now, in Relativity, coordinate changes have no physical meaning; this is the basis of the principle of general covariance~\cite{kretschmann-17,anderson-chiu}. This is similar to the case of surface integrals, in which reparameterisation of the surface does not change the value of the integral. Covariance is not a symmetry, and changes of coordinates are not symmetry transformations, since a Riemannian manifold may have a trivial group of isometries. There are very few spaces, such as Euclidean space, in which one may introduce distinguished coordinates compatible with a global symmetry~group.

We begin with background information, reviewing the axiomatics of the event manifold and of the observer manifold (Section~\ref{sec2}). Special cases include manifolds in the usual sense, including fibre bundles, but are not reducible to them. The event manifold may be viewed as the system attached to a ``laboratory'', which may perform measurements according to a ``standard'' apparatus and records events on a Lorentzian manifold. On the other hand, the system attached to an elementary particle will be associated with a sequence of observers, one for each position of the particle. We then review (Section~\ref{sec3}) the Newtonian treatment of rotation and the difficulties of a straightforward generalisation of it.
Section~\ref{sec4} deals with the reinterpretation of the relativistic rotation transformation (RRT) introduced by Franklin and rediscovered many times. Here, the observer manifold is obtained by attaching a different copy of Minkowski space to every point of every circular trajectory. However, the radius $r$ of the trajectory plays the role of a parameter and not a coordinate. That is why the RRT is not a change of coordinates in Minkowski space. The applications of Franklin's rotation transformation have already been discussed at length in the literature~\cite{K:aflb:96b}. We, therefore, focus on its impact on the definition of the observer manifold in the context of rotation.

Section~\ref{sec:thomas} deals with the motion of a vector attached to a moving electron, with an application to Thomas precession, and Section~\ref{sec6} with the motion of charged particles in a constant electromagnetic field. A few results that also hold for variable fields are given in Section~\ref{sec7}. In both of these examples, the observer manifold is again a collection of spaces attached to every point of a trajectory. Concluding remarks and perspectives are given in Section~\ref{sec8}.

The main point of this paper is that rotation in Relativity cannot be represented as a parameterised sequence of motions in a pre-existing space, but should be viewed as a sequence of manifolds representing local systems of reference, which form together the observer manifold.

\section{The Evolution of the Manifold Concept as It Was Applied to Physics: Historical~Perspective}\label{sec2}

The notion of a manifold went through four stages before it was applied to Relativity. It was first developed for surfaces in ordinary space by Gauss, then extended to three-dimensional manifolds by Riemann, without reference to an ambient space. In parallel, it was realised by Felix Klein that Euclidean geometry and similar ones admitting spaces of motions could be conveniently studied by direct consideration of their groups of (global) symmetry. Fourth, the development of tensor calculus showed that one could define geometric quantities and objects such as tensors in manifolds without any symmetry. The introduction of Relativity showed that one needs to axiomatise the way different observers compute geometric quantities on the basis of a measurement protocol, which was based on ``standard rods and clocks''. As a result, different observers do not have direct access to each others' measurement apparatus. We briefly review these stages and write out the different axiomatics that were developed in this connection.

For a general history of the manifold concept in mathematics, see \cite{scholz}. We limited ourselves to those points relevant to our considerations.

\subsection{Local Differential Geometry and Global Group-Theoretic Models}

The introduction of spherical geometry in ancient Astronomy could have paved the way for a geometry of curved surfaces. It did not, because all constructions on the sphere could also be viewed as taking place in a Euclidean space of dimension three. This way of considering space remained standard until the Nineteenth Century. Now, in Euclidean space, there is a distinguished class of coordinates that have metrical meaning; in rectangular coordinates $(x,y,z)$, for example, the difference in the $z$ coordinates of two points with the same $x$ and $y$ coordinates directly gives their distance. These coordinates are not unique, but there is a limited set of transformations, namely Euclidean motions, that relate to one another all possible such coordinates with metrical meaning, within what was viewed as absolute space.

Gauss first showed how to work with coordinates that do not have direct geometric meaning. In his analysis of surveying, he considered a surface in Euclidean three-space, in which points are labelled by arbitrary coordinates $(u,v)$. He showed that one could define a measure of curvature that does not depend on coordinates, namely what we now call Gaussian curvature. Gauss also defined geometric quantities that depend on the derivatives of the unit normal to the surface, which define the ``second fundamental form''. But, the main progress for our purposes was the introduction of line measurement in arbitrary coordinates and the search for quantities that do not depend on coordinates.

Riemann generalised Gauss' considerations to three-dimensional space and, thus, did away with the need to consider an ambient space since four-dimensional space had not been considered yet. He showed that, if we have a Riemannian metric in coordinates $(u,v,w)$ and assume all metric components are smooth, one can define a quantity, which we now call the Riemann tensor, that must vanish if we are in Euclidean space. One can then make a change of coordinates to rectangular coordinates $(x,y,z)$, at least locally. Therefore, here too, space is absolute, and there exists a distinguished class of coordinates. His work, as continued by Christoffel, Ricci, Levi-Civit\`a, and \'Elie Cartan, also showed how to work in spaces in which the Riemann tensor does not vanish, so that there are no such distinguished coordinates and, in general, no symmetries at all, as well as no reduction, even in a limited region, to Euclidean space. Indeed, it is not possible to make the Riemann--Christoffel connection coefficients $\Gamma^a_{bc}$ vanish in a full neighbourhood of a point; otherwise, the Riemann tensor would vanish as well and space would be flat in this neighbourhood. This will have the consequence that, in General Relativity, even in free fall, the effects of gravity are measurable at second order. Therefore, when one tried to generalise inertial systems in General Relativity, it became clear that they could not be identified with small patches of an event manifold.

Felix Klein took a different view. He showed in his Erlangen Program~\cite{klein-erlangen} that Euclidean geometry is best defined not by a class of axioms on lines and points, but by the existence of a group of transformations that globally transform space into itself, while mapping one set of rectangular coordinates or, equivalently, an orthonormal triad with a base point to another. Non-Euclidean geometries were simply the geometries deduced from different groups. The various models of these new geometries are merely different ways of realising a set of points admitting a family of maps that generalise Euclidean motions. This gives prominence to the group concept, which, however, will turn out to be too restrictive. Indeed, recall the definition of a group, making it slightly more explicit than is usually done.
\begin{Definition}[\textbf{Definition of a group}]
A set $G$ is called a group if
\begin{enumerate}
 \item[(Op)] (Operations are always defined.) For any two $g$ and $g'$ in $G$, there is an element of $G$ called $gg'$, called the composition, or product of $g$ and $g'$ (in $G$).
 \item[(Id)] (Existence of an identity.) There is an element $e\in G$ such that $eg=ge=g$ for every $g\in G$.
 \item[(Inv)] (Existence of an inverse.) For every $g\in G$, there is an element in $G$, called $g^{-1}$, such that $gg^{-1}=g^{-1}g=e$.
 \item[(As)] (Associativity of group law.) For every $(g,g',g'')\in G^3$, we have $(gg')g''=g(g'g")$.
\end{enumerate}
If $X$ is any set, one says that a group $G$ \emph{acts on} $X$ if the following three axioms hold:
\begin{enumerate}
 \item[(Act)] (Action is always defined.) To any $g$ in $G$, one associates a mapping $F_g : X\to X$.
 \item[(Id)] (Effect of the identity.) For any $(g,x)\in G\times X$, $F_e(x)=x$.
 \item[(Com)] (Compatibility with group law.) For every $(g,g',x)\in G^2\times X$, we have $F_{gg'}(x)=F_g(F_{g'}(x))$.
\end{enumerate}
\end{Definition}
One key point that will fail in the sequel is the assumption that group laws or actions are always defined. Two examples of transformations that do not form a group will be met repeatedly: transfer maps between observers and special Lorentz transformations. Intuitively, while it makes sense to assume that all observers may perform the same types of measurements on a similar apparatus, there is no reason to assume that they have full access to each other's measurements, unless we consider only observers that all coordinate their protocols with a centralised entity.

There is another feature of Euclidean space that will have to be relinquished: that it should be possible to establish a correspondence between remote regions of space. Euclidean motions not only map points to points, but also vectors to vectors. Indeed, to two points $A$ and $B$, one associates a vector $\vecteur{AB}$, and if $C$ is another point, there is a fourth point $D$ such that $\vecteur{AB}=\vecteur{CD}$. One speaks of equipollence to represent this equality of vectors. But, two points on a curved surface generally do not define a vector that would lie on the surface in any reasonable sense. Parallel transport was introduced to partially alleviate this difficulty, by replacing $\vecteur{AB}$ by a vector tangent to the surface at $A$, but parallel transport requires a curve and a rule for transport: the equipollence law cannot be determined by the points $A$ and $C$ alone: one needs to specify a geodesic.

All these difficulties explain that we need to introduce a set of axioms that does not implicitly assume the existence of distinguished coordinates or global correspondences between the representations of observers.

\subsection{Axiomatics of the Event Manifold}

When Einstein put forth Special Relativity in 1905, and throughout his elaboration of General Relativity, it was apparent that he was interested in what observers could measure locally. There is no operational way to compare instantly quantities at different points. However,
Minkowski pointed out that Einstein's 1905 representation of spacetime could be viewed as the introduction of an indefinite scalar product in a (global) four-dimensional space, leading to what we now call Minkowski space $M_4$. Therefore, Special Relativity could be analysed in two ways. The first is to say, with Einstein, that inertial observers are each associated with a different Minkowski space, which has only local validity. Lorentz transformations then express the relations between the representations of the same events by different observers. Each observer seeks to identify a local inertial frame in which distinguished coordinates having direct physical meaning may be introduced. The second view is to assume that the set of events is itself endowed with the structure of a global Minkowski space, of which parts are detected by individual observers. As we have shown (\cite{SK-23-observer}, Theorem 1), this amounts to identifying the spacetimes of all observers to a single one. While this is meaningful in Special Relativity, this is not the case in General Relativity.

There is a further difficulty. The interpretation of measurements, such as that of the meson's lifetime, requires one to assign to an elementary particle with nonzero rest mass, such as a meson, a local inertial observer that travels with it. That is how we account for the fact that the lifetime of a meson in its inertial system differs from its lifetime in the laboratory system in which its path is being tracked. Now, no \emph{human} observer exists that would, so to speak, ``ride along an elementary particle'' and could perform the operations allowed by the postulates of Special Relativity. The observers of the theory are, therefore, \emph{notional observers}, rather than actual observers: the operations of coordinate assignment and coordinate change are never actually carried out, except possibly for a ``laboratory system''. When several observers in relative uniform motion meet at a spacetime point, their notional observers are related by Lorentz transformation, so that there is for them only one Minkowski space in which all their translational motions take place, but this identification of notional observers has no reason to hold for non-uniform motions. It is possible to circumvent this difficulty by postulating instead that the de Broglie wave of a particle includes, through its dynamics, the elements of a local system attached to it~\cite{C-eq20}. This wave is a purely material object that is represented in the theory by the notional observer associated with the particle. The observers in the sequel are always understood in this sense; they do not require human intervention and, indeed, such intervention is generally not feasible.

In his search for a relativistic theory of gravity, Einstein took over the existing theory of Riemannian manifolds and tried to introduce into it a physical content adapted to his vision of the gravitational field. We currently read his attempts as leading to the following axiomatics of the event manifold: events accessible to measurement for all observers, inertial or not, are points on an event manifold with a Lorentzian metric, satisfying Einstein's field equations; for background material, see (\cite{SK-nlw}, Chapter 6). The standard definition of such a manifold is as follows.
\begin{Definition}[\textbf{Definition of a manifold of dimension $n$}]\label{def:eventmanifold}
A (differential) manifold $M$ of dimension $n$ and of class $C^k$, where $k\geq 1$ or $k=\infty$, is a set equipped with a family $(U_\alpha, \vp_\alpha)_{\alpha\in Z}$, called an \emph{atlas}, indexed by an arbitrary set $Z$ such that:
\begin{enumerate}
 \item[(Cov)] (Covering axiom.) For every $\alpha\in Z$, the set $U_\alpha$ is a subset of $M$. Furthermore, any point of $M$ belongs to at least one $U_\alpha$.
 \item[(CA)] (Coordinate axiom.) Each $\vp_\alpha(U_\alpha)$ is an open subset of $\RR^n$, the latter being endowed with its usual topology, and the $\vp_\alpha$ are one-to-one. Therefore, each point $P$ of $U_\alpha$ may be written $P=\vp_\alpha^{-1}(x^1,\dots,x^n)$ for some $\alpha$, where $(x^1,\dots,x^n)\in\RR^n$ is uniquely determined by $\alpha$. These $n$ numbers are called the \emph{local coordinates} of $P$ in the \emph{coordinate chart} $(U_\alpha, \vp_\alpha)$.
 \item[(CC)] (Axiom on coordinate changes.) If two coordinate charts $(U_\alpha, \vp_\alpha)$ and $(U_\beta, \vp_\beta)$ are such that $U_\alpha\cap U_\beta\neq\emptyset$, the map $\vp_\beta\circ\vp_\alpha^{-1}$ is of class $C^k$ on its domain of definition, as so is its inverse $\vp_\alpha\circ\vp_\beta^{-1}$.
\end{enumerate}
One defines a topology on $M$ by saying that a set $X\subset M$ is open if, for any coordinate chart $(U_\alpha, \vp_\alpha)$, the sets $\vp_\alpha(X\cap U_\alpha)$ are open in $\RR^n$. Submanifolds are defined in the obvious way.
\end{Definition}
By construction each map, $\vp_\alpha$ is a homeomorphism between $U_\alpha$ and $\vp_\alpha(U_\alpha)$. We shall deal throughout with smooth manifolds ($k=\infty$) for simplicity. The definitions of geometric objects, including tensors and connections, are standard~\cite{hawking-ellis,k-n}, including more-general ``geometric objects'' such as Lie derivatives (\cite{yano} p.~18 sq.). In particular, a Lorentzian manifold is a manifold endowed with a metric tensor with signature $(3,1)$ (with one negative square). As a consequence, coordinates lose all physical meaning. Geometric objects on $M$ are determined by their expressions in the different coordinate patches $U_\alpha$, related by specific transformation laws, which form the basis of general covariance. Thus, general covariance is not a symmetry; it is similar to a change of variables in an integral, that is simply a change of label, that does not affect the value of the integral.

This provides a convenient framework for the event manifold $\cale$ of a fixed observer, which we may call the ``laboratory'' observer. Each point represents a spacetime coincidence, and the metric enables this particular observer to derive coordinate-independent information from such events. The question is the status of other observers. Einstein's 1905 operational procedure, still in use, refers to two observers having access to ``standard'' rods and clocks. Now, the predictions of Special Relativity are also found to be correct in situations where such a procedure cannot be carried out, as in the case of the rest system of an elementary particle. The accuracy of the predictions of Special Relativity means that there must be a material structure that performs the same function as Einstein's human observers, as well as a mathematical structure to represent it. The de Broglie wave could serve this purpose~\cite{C-eq20}. As for the mathematical structure, it must be similar to the system of the laboratory, since it is a basic axiom that all observers perform the same types of operations. Every observer $A$ is, therefore, associated with a different manifold $S_A$ with an origin $O_A$. There is no reason to assume that $S_A$ extends beyond a small neighbourhood of $O_A$; in all the examples of this paper, $S_A$ consists of part of a four-dimensional vector space. The origin $O_A$ is tracked by the laboratory $L$. Therefore, there is a curve $X_A$ on the event manifold $\cale$ that represents the positions of $O_A$ in $\cale$. In addition, for events that are close to $X_A$, $A$ and $L$ have possibly different representations of them. We call \emph{transfer maps} the mappings that connect these representations. Again, they are locally defined, possibly even only infinitesimally. Also, observer $A$ is not fully determined by its trajectory---its trace $X_A$, in the sense of axiom (T) in Definition \ref{def:observermanifold1} below---on the event manifold: if another observer $B$ happens to cross the trajectory of $A$, the two observers will remain different, except if they exactly have the same state of motion. As a result, it is not possible to define a topology on $\cale$ in such a way that one can separate $X_A$ and $X_B$ by non-intersecting neighbourhoods; these are the same point in $\cale$. If we agree that observers are not events, we, therefore, need fairly general sets of axioms that should encompass all these possibilities. We give two such sets below.

\subsection{First Set of Axioms for the Observer Manifold}

Consider an observer $A$ of which the origin traces a curve $X_A$ on the event manifold $\cale$, parameterised by proper time $s$. Thus, in a local chart of $\cale$, $X_A : s\mapsto (x^a(s))$. The observer labels events on a different manifold $S_A$, and the observer manifold is the collection of all these manifolds $S_A$. The event manifold also represents the events as recorded by another observer, which we called the ``laboratory'' $L$, even though it, too, may not be human. The issue, therefore, is to understand how $A$ and $L$ compare their descriptions. In Special Relativity, the comparison of these descriptions leads to a complete identification of their representations~\cite{SK-23-observer}, up to a Lorentz transformation, if we assume that they compare their descriptions via local diffeomorphisms, as in the following definition, taken from that~paper.
\begin{Definition}\label{def:observermanifold1}
An \emph{event manifold} is a (smooth, differential) manifold $\cale$ of dimension four and of class $C^\infty$. An \emph{observer manifold} $\mathscr M$, over the event manifold $\cale$, is a family $(S_A)_{A\in \mathscr A}$ of manifolds in the sense of Definition~\ref{def:eventmanifold}, indexed by an arbitrary set $\mathscr A$, such that:
\begin{enumerate}
 \item[(M)] (Manifold associated with an observer $A$.) Each $S_A$ is a Lorentzian manifold (typically, consisting of only one chart) with a distinguished origin $O_A\in S_A$. It represents the events as recorded by $A$.
 \item[(T)] (Trace of an observer on the event manifold.) Each $S_A$ has a trace $X_A$ on the event manifold $\cale$.
 \item[(TE)] (Transfer map to the event manifold.) If $A$ is an observer, then there is an open set $V_A$ in $S_A$, related by a diffeomorphism $\phi_{A}$ to an open set in $\cale$. It is not an isometry in general.
 \item[(TO)] (Transfer map between observers.) If two observers $A$ and $B$ can represent some of the same events ($W:=\phi_{A}(V_A)\cap \phi_{A}(V_B)\neq\emptyset$), then the open sets $U_A:=\phi_{A}^{-1}(W)$ and $U_B:=\phi_{B}^{-1}(W)$, in $S_A$ and $S_B$, respectively, are related by a diffeomorphism $\phi_{AB}$, with $\phi_{AB}(O_A)=O_B$.
\end{enumerate}
The metric on each $S_A$ is not assumed to be the pull-back of the metric on $\cale$ by $\phi_A$.
\end{Definition}
One could make the definition slightly shorter by identifying the event manifold with the system of the (distinguished) laboratory that records the events, but it may be best to be more explicit. Indeed, the observer attached to a state of motion of an elementary particle is fundamentally different from the laboratory observer: there are neither human observers nor actual measuring instruments riding on it~\cite{C-eq20}.

The proviso that the maps $\phi_A$ are not necessarily isometries makes this definition irreducible to the definition of a manifold in the ordinary sense. Otherwise, one could have viewed the open sets $V_A$ as coordinate patches on the event manifold. The observer manifold is not a fibre bundle either, in which the $S_A$ would be fibres, because every point on the base of a fibre bundle should have a well-defined fibre. Here, fibres are defined only over points of the traces $X_A$ of actual material particles. Moreover, if two observers $A$ and $B$ happen to meet at the same event $X$, there would be two of these supposed fibres over it, namely $S_A$ and $S_B$. Another approach to dynamics is to make the metric of a spacetime dependent on the tangent to the worldline of an observer, as in Finsler geometry. Recent authors have suggested similar approaches on a flat background~\cite{friedman-action}; see also~\cite{scarr-friedman-action} (we have only seen the former work, the latter one having been pointed out by a referee). Again, these developments cannot represent several observers meeting at one point.

For all these reasons, it is necessary to introduce axiomatics that are as inclusive as possible, to avoid unwittingly ruling out logical possibilities. The transfer maps between observers, like the coordinate changes in an atlas, do not form a group because their composition is not always defined. Therefore, Klein's group-theoretic approach cannot be generalised either, because coordinate changes are not symmetries.

This definition is still not general enough. It is adequate in the case observers are in a position to record events in a full neighbourhood of their origin, as would be the case for human observers, or devices that include identical physical measuring apparatuses. However, the apparatus, being itself material, is affected by the local gravitational field. This will lead to the C-equivalence principle, which needs to be incorporated into the axiomatics.

\subsection{Conformal-Equivalence as a Mathematical Expression of Einstein's View of His Equivalence Principle}

Einstein's development of General Relativity as recorded in his own account~\cite{einstein:meaning} may be viewed in the light of later developments as an attempt to cast his physical views into an existing mathematical framework. We suggested that the mathematical concept of manifolds as developed before Einstein is not sufficient to accurately represent his view---or what has been ascertained in later experiments as well. Summarising our earlier analysis~(\cite{SK-23-observer}, Section 2), the main issue is to understand how the notion of an inertial system should go over to General Relativity. Inertial systems in \SR\ are represented by copies of Minkowski space, with a distinguished time-oriented vector representing the (tangent to the) world-line of the given observer. The fact that rotation may be detected by local experiments, such as Sagnac's, shows that each observer, even non-inertial, should be able to define a local inertial system and to determine geometric quantities by the contraction of tensors with the vectors of a local orthonormal tetrad (ONT).

The question now is: What is the mathematical object that corresponds to this local inertial system? The simplest answer would be to take an inertial system to be a small patch on the event manifold $\cale$. But, this is impossible, because it would mean that one could reduce the metric on $\cale$ to the Minkowski metric. Mathematically, this is not possible unless the curvature tensor vanishes. However, the metric on $\cale$ can be reduced to a sum of squares \emph{at one point}. This suggests identifying the local inertial system to the tangent space of $\cale$ at one event. Because of the assumption that all inertial observers are equivalent, this tangent space should be identified with the Minkowski space of a standard inertial observer. But, even this is not sufficient because the Pound--Rebka experiment showed that the ``standard rods and clocks'' posited by the theory are affected by the local gravitational field. Therefore, if the interpretation of measurements is to give the observed results, we need to posit that local non-rotating systems are not identical to Minkowski space, but only conformally equivalent to it. The isotropy of local descriptions~(\cite{SK-23-observer},~Sections 2.3 and 2.4) and the constancy of the speed of light leaves the choice of a conformal factor as the only freedom. We summarise this as follows.
\begin{quote}
All local observers are provided with standard measuring devices (identical devices having the same behaviour at the same point when relatively at rest). \emph{Because of the presence of the field, these devices have a behaviour that varies from point to point}. This may be expressed by the proportionality of the metrics on the manifold $S_A$ associated with observer $A$ and the metric of an inertial observer:
\begin{equation}\label{fg:46.1}
 (ds^2)_A=\La_A\overline{ds}^2,
\end{equation}
where the elementary squared interval between two events is $(ds^2)_A$ according to an observer located at $A$, but would be $\overline{ds}^2$ according to an inertial observer. The quantity $\La_A$ is not accessible to experiment, since $\overline{ds}^2$ cannot be measured when a nonzero field is present, but the \emph{ratio} $\La_A/\La_B$ comparing the deviations from \SR\ at two different points $A$ and $B$ is accessible to measurement~\cite{K-ihp-ceq,K-ihp-ceq-n}.
\end{quote}
The local system of reference will be called a \emph{pseudo-inertial system}.

We shall say that a theory is \emph{compatible with C-equivalence} if it is consistent with:
\begin{enumerate}
 \item Weak equivalence;
 \item The identity of local descriptions of identical phenomena;
 \item The isotropy of local spacetime;
 \item The pseudo-inertial character of the local system of reference.
\end{enumerate}
The letter C stands for ``conformal'' since pseudo-inertial systems are merely conformally Minkowskian and not Minkowskian. All the usual verifications of General Relativity can be accounted for in this framework and, indeed, require it since setting the conformal factor to unity leads to results incompatible with measurements~\cite{K-ihp-ceq,K-ihp-ceq-n,K-necessaire-66}.

We now turn to a set of axioms for the observer manifold compatible with C-equivalence.

\subsection{Second Set of Axioms for the Observer Manifold}

We consider now a set of axioms adapted to the trajectory of a material particle. Each point of its trajectory will be assigned a manifold. We, therefore, associate the various states of motion of the particle with a different observer.
\begin{Definition}\label{def:observermanifold2}
An \emph{event manifold} is a (smooth, differential) manifold $\cale$ of dimension four and of class $C^\infty$. Consider a trajectory on $\cale$, parameterised by the arc length (for the metric on $\cale$): $X : I\to\cale, s\mapsto X(s)$, where $I$ is an interval on the real line. The \emph{observer manifold} $\calm$ for this trajectory, over the event manifold $\cale$, is a family $(\calm_s)_{s\in I}$ of manifolds in the sense of Definition~\ref{def:eventmanifold}, indexed by $s$, such that:
\begin{enumerate}
 \item[(M)] (Manifold associated with $s$.) Each $\calm_s$ is a Lorentzian manifold (typically, consisting of only one chart) with a distinguished origin $O_s\in \calm_s$.
 \item[(TE')] (Transfer map to the event manifold.) For every $s$, there is, for every value of the arc length parameter $s$, a map $\psi_{s}$ from $\calm_s$ to the tangent space $T_{X(s)}\cale$. Here, $\psi_{s}(O_s)$ is the origin in $T_{X(s)}\cale$.
 \item[(CM)] (Conformally Minkowskian metric on $\calm_s$.) For every $s$, the metric on $\calm_s$ differs from that of an inertial system by a conformal factor $\Lambda_s$.
\end{enumerate}
\end{Definition}
The second definition is wider than the first, because it only requires the specification of maps from the observers to the laboratory. Just as before, even if we consider a particle in motion that carries a distinguished frame, it is not possible to view the observer manifold as a frame bundle, because there may be different fibres at the same point and because the transfer maps are not necessarily projections onto the base. Therefore, observers in different states of motion are represented by different mathematical objects, even if they pass through the same point on the event manifold. This requires the introduction of a mathematical object of which the points are the observers, as was performed above.

In intuitive terms, each observer is now identified with a copy of the tangent space of the event manifold, endowed with a metric that depends on the state of the observer. The trajectory $X(s)$ in Definition \ref{def:observermanifold2} corresponds to the trace of an observer $A$ in Definition \ref{def:observermanifold1}. The main new element is that different observers are introduced for different points on the trajectory. Thus, the observer manifold contains observers that only lie along the trajectory of $X_A$. Typically, the conformal factor depends on the expression $g_{ab}u^au^b$ where $u^a$ is the unit tangent to the trajectory of $X_I$ in $\cale$; see~\cite{K-ihp-ceq}. One could further generalise this set of axioms by requiring that the metrics on $\cale$ and $S_A$ agree to any given order $m$; Definition~\ref{def:observermanifold2}, then, corresponds to $m=1$. But, this will not be needed for the applications we have in view here. The introduction of several non-intersecting trajectories is straightforward, since each trajectory may be handled independently. In the case of intersecting trajectories, a new axiom similar to TO in Definition \ref{def:observermanifold1} could be introduced, but will not be needed in this paper.

We now turn to examples related to uniform rotation. In all these cases, Definition~\ref{def:observermanifold2} will be the appropriate one, and the freedom to associate different observers with different points on the trajectory will be essential. Since no gravitational field is present, the conformal factor will be equal to unity. However, the other requirements of C-equivalence will still hold. We begin by recalling the difficulties of representing uniform rotation in Relativity by generalising the notion of rigid rotation. We, then, show that the relativistic rotation transformation introduced by Franklin, and found again by Trocheris and, later, by Takeno, actually represents transfer maps in the sense of the above definition and not diffeomorphisms or changes of coordinates. In particular, the radial variable in these transformation is a parameter and not a variable. We, then, apply similar considerations to Thomas precession and to the motion of a charged particle.

\section{Rotation and Rigidity}\label{sec3}

We recall some steps in the analysis of rotation, showing how the view of rotation as a rigid body motion parameterised by a time parameter gradually became untenable, in Newtonian mechanics or Special Relativity.

\subsection{Rotation in Newtonian Spacetime}

Let $N_4 = E_3 \times \RR$ be the Newtonian spacetime. $E_3$ is a
Euclidean space invariant under the six-parameter orthogonal group
SO(3) of translations and rotations, and $\RR$ represents the
translation invariant absolute time; $E_3(t):=E_3\times\{t\}$ is the locus of all
simultaneous events at an arbitrary time $t$.

Motion is, then, conceived of as an application of $E_3(t)$ into
$E_3(t+dt)$, i.e., of $E_3$ into itself:
\begin{itemize}
\item[(a)] Translational motions are time-dependent changes of the spatial origin:
\[
(x^{\alpha}) \mapsto (x^\alpha + a^{\alpha}(t))
\]
in a system $S$ of Cartesian coordinates $x^{\alpha} = (x, y, z)$, where
$\alpha$ or any Greek index takes the values 1, 2, and 3.

\item[(b)]
 Rotational motions leaving the origin fixed are time-dependent
 Euclidean rotations:
\[(x^{\alpha}) \mapsto (x^{\alpha'} = R^{\alpha'}_{\beta}(t) x^{\beta}).
\]
The inverse transformation is given by the inverse matrix of $(R^{\alpha'}_{\beta}(t))$, denoted by $(R^{\alpha}_{\beta'}(t))$:
\begin{equation}\label{eq:rot}
x^{\alpha}=R^{\alpha}_{\beta'}(t) x^{\beta'}.
\end{equation}
\end{itemize}
{Among} these motions, uniform translations, defined by $\dot{a}^{\alpha}(t)
= da^{\alpha}/dt = \text{const.}$,
have the characteristic property of not being
detectable by an experiment internal to the moving reference system $S'$
and define an equivalence class of reference systems, that of inertial
systems, hence the principle of Galilean Relativity consecrating the
invariance of Dynamics under the Galilean group.

Let the frame $S'$ be rotating with respect~to $S$ about their common origin. To determine the relation between the two systems, consider first a
particle $P$ of velocity $\mathbf v$ with respect to $S$. The components of $\mathbf v$ are, therefore, $v^{\alpha} = dx^{\alpha}/dt$. With respect~to $S'$, the same $P$ has a
velocity $\mathbf v'=(v^{\prime\alpha'})$, with components given as follows.
\begin{equation}\label{3}
v^{\prime\alpha'} = dx^{\alpha'}/dt= R^{\alpha'}_{\beta} v^{\beta} +
\dot{R}^{\alpha'}_{\beta} x^{\beta}.
\end{equation}
{Therefore}, using \eqref{eq:rot}, the expression of this vector
 $\mathbf v'$ in $S$, namely $(v^{\prime\alpha})$ (with unprimed index), is given by
\begin{equation}\label{1}
v^{\prime\alpha} =R^{\alpha}_{\gamma'}v^{\prime\gamma'} = v^{\alpha} + R^{\alpha}_{\gamma'} \dot{R}^{\gamma'}_{\beta}
x^{\beta}.
\end{equation}
We now specialise these formulae to the case when particle $P$ is at rest with respect~to $S'$---so that $v^{\prime\alpha}=0$. In that case, $P$ has with respect~to $S$ the velocity:
\begin{equation}\label{2}
 v^{\alpha} = -\omega^{\alpha}_{\beta} x^{\beta} := - R^{\alpha}_{\gamma'
} \dot{R}^{\gamma'}_{\beta} x^{\beta}.
\end{equation}
{This} equation defines the quantities $\omega^{\alpha}_{\beta}$.
Differentiating \eqref{3} with respect to absolute time $t$, we obtain
\begin{equation}\label{4}
\dot{v}^{\prime\alpha'} = \dot{v}^{\alpha'} + 2\omega^{\alpha'}_{\gamma'}
v^{\gamma'} + (\dot{\omega}^{\alpha'}_{\beta'}
+ \omega^{\alpha'}_{\gamma'} \omega^{\gamma'}_{\beta'}) x^{\beta'}.
\end{equation}
{This} is the form of the acceleration of a particle with respect to $S'$.
If we further assume the particle in rotation satisfies $\dot{v}^{\alpha'}= 0$, we see that it has nevertheless a nonzero acceleration in $S'$, which is the sum of the Coriolis
and centrifugal accelerations, respectively given by the second and third terms in
the r.h.s.\ of the above equation.
Their detection by internal effects leads to the so-called ``absolute'' character of rotation or, more precisely, the fact that their existence in a system indicates that this system is not inertial.

Formulae (\ref{3})--(\ref{4}) define the Galilean rotation transformation (GRT).

\subsection{Minkowskian Spacetime and Lorentz Transformations}

Consider Minkowski spacetime $M_4$, namely a four-dimensional homogeneous and isotropic space, with a global system of coordinates $\{x^i\}$, with $i,j,\dots$ taking values $0,\dots,3$.
We first show that rigid motion in Special Relativity cannot be defined satisfactorily and, then, show that
a uniform translational motion is no longer a time-dependent change of
spatial origin, but a hyperbolic rotation represented by the Lorentz boost.
The intuitive reason for these difficulties is that the mapping of the space $E'_{3}$
of a moving observer $S'$ onto the space $E_{3}$ of $S$, unlike the Newtonian case, is not an
element of SO(3) since these two three-spaces are different and are not endowed with distinguished triads, which might enable their identification.

\subsubsection{Difficulties with Rigidity in Special Relativity}

After Ehrenfest (1909)~\cite{ehrenfest-09} remarked that a rigid body cannot be set into
rotation, since the perimeter of the circle described by a point of
the body would suffer Lorentz contraction while its radius remains
unchanged, the compatibility between rigidity and rotation
has been continuously debated. Born \cite{born-09,born-10} generalised the Newtonian concept of a rigid body to that of a time-like rigid congruence such that the corresponding four-velocity
$u^{a}$ obeys the rigidity criterion:
\begin{equation}
P_{ab} = \nabla_{a} u_{b} + \nabla_{b} u_{a} + u_{a} \dot{u}_{b}
+ u_{b} \dot{u}_{a} = 0,
\end{equation}
where
\[
\dot{u}_{a} = u^{c}\nabla_{c}u_{a}.
\]
{Herglotz} (1910)~\cite{herglotz-10} and F. Noether (1910)~\cite{noether-f-10} further investigated this criterion
and showed that those motions have only three degrees of freedom and
that rigid rotation is uniform.
See also Pirani and Williams (1962)~\cite{pirani-williams-62}, Thirring (1993)~\cite{thirring-93}, Trautman (1964)~\cite{trautman-64}, Synge (1965)~\cite{synge-65}, Kichenassamy,
(1982, 1996)~\cite{sk-82-rigid,K:aflb:96b}, and N. Rosen (1947)~\cite{rosen-47}.
von Laue (1921)~\cite{laue-21} gave yet another argument against the existence of the Newtonian rigid
body in Relativity: one can set up an arbitrary number of non-interacting or independent disturbances for a sufficiently short interval of time, such disturbances having finite propagation velocity.

It appears clear that it is impossible to attach a rigid body to a set of particles without contradictions. The only remaining possibility is to define observers using orthonormal tetrads. Indeed, in Newtonian Mechanics, the position of a rigid body is entirely determined by an origin and a triad. However, in Special Relativity, tetrads remain meaningful even though rigid bodies are not available.
To go further, we need to briefly recall some facts about the structure of Lorentz transformations viewed as transformations from one tetrad to another, rather than transformations internal to $M_4$.

Let $(\ee_a)=(e_0,\dots,e_3)$ denote an orthonormal tetrad basis of $M_4$ (ON)T (or tetrad for~short):
\begin{equation}\label{th-1}
 \ee_a=e_a^i\pa_i,
\end{equation}
where $(\pa_i=\pa/\pa x^i)$ is the basis of four-vectors deduced from the coordinates, and the determinant of $(e_a{}^i)$ is assumed to be nonzero. The scalar products of vectors of the tetrad are, therefore, given by
\begin{equation}\label{th-2}
 \ee_a\cdot \ee_b = \eta_{ab}=\delta_{ab}-2\delta_a^0\delta_b^0.
\end{equation}
{The} first vector, $\ee_0=(e_0^i)$, may be viewed as the four-velocity of an observer $O$ or of a particle. As usual, $(e_i{}^a)$ is the inverse (matrix) of $(e_a{}^i)$. The co-basis (or dual-basis) ${\bf\theta}^a$ is defined by
 \begin{equation}\label{th-3}
 {\bf\theta}^a=dx^ie_i{}^a, \text{ and we have }{\bf\theta}^a(\ee_b)=\delta_a^b,\quad
 e_a{}^ie_i{}^b=\delta_a^b;
 \end{equation}
here, the parentheses stand for evaluation (of the one-form ${\bf\theta}^a$ on the vector $\ee_b$). The physical components $(x^a)$ of a vector $x=x^i\pa_i$ are given by the projections of $x$ on the ONT:
\begin{equation}\label{th-4}
 x^a=x^ie_i{}^a.
\end{equation}
{A} change of coordinates $(x^i\mapsto x^{i'}$) does not affect the tetrad (since $\ee_a=e_a{}^i\pa_i=e_a{}^{i'}\pa_{i'}$), so that physical quantities, and hence, the laws of Physics, must be generally covariant, that is have the same tensorial form in every system of coordinates: the physical description cannot depend on how we label events.

Let us now consider linear, non-singular transformations $T$ of $M_4$:

\begin{equation}\label{th-5a}
 T : x\mapsto X\quad \text{ or } \quad x^i=x^ae_a{}^i\mapsto X^i=x^aE_a{}^i,
\end{equation}
where
\begin{equation}\label{th-5b}
 E_a{}^i=e_b{}^iT^b{}_a,\quad\det|T^a{}_b|\neq 0,
\end{equation}
from which we obtain
\begin{equation}\label{th-5c}
e_i{}^a=T^a{}_bE_i{}^b\quad \text{ with } E_a{}^iE_i{}^b=\delta_a{}^b,
\end{equation}
and
\begin{equation}\label{th-6}
X^i=X^a e_a{}^i,\quad X^a=T^a{}_bx^b=x^iE_i{} ^a.
\end{equation}
It follows that $T$ may be viewed:
\begin{enumerate}
 \item [(a)] Either as a basis transformation (\ref{th-5b}): $(\ee_a)\mapsto (\EE_a=\ee_bT^b{}_a)$;
 \item [(b)] Or as a transformation of physical components (\ref{th-6}): $(x^a)\mapsto(X^a=T^a{}_bx^b)$ with respect to the (fixed) basis $(\ee_a)$.
\end{enumerate}
{In} the first case, it relates two conceptually different spacetimes; in the second, it represents a transformation within a single spacetime.

$T$ is called a Lorentz transformation, written $L=(L^b{}_a)$, when it is (linear and) orthogonal, that is preserves the metric:
\begin{equation}\label{th-7a}
 \ee_a\mapsto \EE_a=\ee_bL^b{}_a; \quad \eta_{ab}=\ee_a\cdot \ee_b
 \text{ becomes } \eta_{ab}=\EE_a\cdot \EE_b,
\end{equation}
so that
\begin{equation}\label{th-7b}
 \eta_{ab}=\eta_{cd}L^c{}_aL^d{}_b.
\end{equation}
{Such} transformations form the Lorentz group, a subgroup of the Poincar\'e group, where the latter also includes translations.

Lorentz transformations, under mild smoothness assumptions, are derived from the two physical postulates:
\begin{enumerate}
 \item [(a)] There exists a class of privileged systems of reference, that of inertial frames, with respect to which a free particle moves with a rectilinear and uniform motion;
 \item [(b)] With respect to any inertial frame, light propagates isotropically with a finite, constant velocity.
 \end{enumerate}
{Equation} $X^0=L^0{}_ ax^a$ contains the information that the simultaneity of distant events is not preserved, i.e., the concepts of space and time are not absolute.

Even though Lorentz transformations form a group, there are distinguished transformations between observers, namely special Lorentz transformations, or boosts, that do not form a group~\cite{lalan-53}. We begin with the general properties of boosts (Section~\ref{sec:boosts}) and, then, consider the compositions of boosts (Section~\ref{sec:nogroup}).

\subsubsection{Boosts}\label{sec:boosts} A special Lorentz transformation or boost is the transformation $B(v,u)$ sending the four-velocity (or first frame vector) $u=e_0$ of a frame $F$ to $v=E_0$, the first vector of another frame $\bar F$, while leaving invariant the plane orthogonal to both $u$ and $v$; it is also called a Lorentz transformation without rotation. General Lorentz transformations are obtained from boosts by composing them with spatial rotations.

The boost $B(v,u)$ may be obtained by composing two reflections (or hyperplane symmetries): first, a reflection with respect to the hyperplane orthogonal to $u$, namely $\delta^i{}_j+2u^iu_j$, followed by a reflection with respect to the hyperplane orthogonal to $u+v$, namely
\[ \delta^i{}_j+\frac{(v^i+u^i)(v_j+u_j)}{1+\gamma},
\]
where $\gamma=-v\cdot u$. Carrying out the composition, we obtain the boost
\begin{equation}\label{th-8}
 B^i{}_j=\delta^i{}_j+\frac{(v^i+u^i)(v_j+u_j)}{1+\gamma}-2v^iu_j.
\end{equation}
{We} accordingly introduce the frame $\bar F$ given by $E_a{}^i=B^i{}_je_a{}^j$.

The relative three-velocity $\beta^\lambda=\gamma^{-1}u^\lambda$ of $F$ relative to $\bar F$ is determined by
\begin{equation}\label{th-9}
 u^\lambda=u^i E_i{}^\lambda.
\end{equation}
{We} obtain from Equation~(\ref{th-8}) the $B$-transforms $E_\lambda$ of the space axes $e_\lambda$ of $F$:
\begin{equation}\label{th-10}
 E_\lambda{}^i=
 \left[\delta_\lambda{}^\sigma+(\gamma-1)\hat\beta^\lambda\hat\beta_\sigma
 \right]
 e_\sigma{}^i-\gamma\beta_\lambda e_0{}^i,
 \text{ where } \hat\beta^\lambda=\frac{\beta^\lambda}{|\beta^\sigma|}.
\end{equation}
{Equation}~(\ref{th-10}) shows that the three-space spanned by $(e_\lambda^i{})_{\lambda=1, 2, 3}$ is not transformed by $B$ into itself, but into the space spanned by $(E_\lambda^i{})_{\lambda=1, 2, 3}$; this expresses that the three-space is not absolute, i.e., that the hypersurfaces of simultaneity are not invariant.

From Equation~(\ref{th-8}), we also obtain
\begin{equation}\label{th-11}
 E_\lambda\cdot e_0=-E_0\cdot e_\lambda.
\end{equation}
{This} expresses that the three-velocity $\bar\beta^\lambda$ of $\bar F$ with respect to $F$ (namely $\gamma^{-1}E_0{}^ie_i{}^\lambda$) is the opposite of the three-velocity $\beta^\lambda$ of $F$ with respect to $\bar F$ (namely, $\gamma^{-1}e_0{}^iE_i{}^\lambda$). Only the physical components of the relative velocity with respect to $F$ and $\bar F$ are meaningful.

We now turn to the lack of the group property for boosts.

\subsubsection{Spatial Triads and Lorentz Cycles}\label{sec:nogroup}

A Lorentz cycle $L_n$ is the composition of $n+1$ boosts such that their product leaves unchanged the initial time-like unit vector $u$:
\begin{equation}\label{th-13}
 L_n=B(u,v_n)B(v_n,v_{n-1})\cdots B(v_2,v_1)B(v_1,u).
\end{equation}
{Thus}, $L_n : u\mapsto v_1\mapsto\cdots\mapsto v_n\mapsto u$. We also choose a tetrad $(e_a)$ of which $u$ is the first vector: $e_0=u$. Taking $n=2$ for simplicity and writing $v_1=v$ and $v_2=w$, the cycle $L_2$ transforms the tetrad $(e_a)$ into a tetrad $(G_a)$ defined by
\begin{equation}\label{th-14}
 G_a=B(u,w)B(w,u)B(v,u) e_a.
\end{equation}
{In} particular, since the boosts in the cycle map, respectively, $u=e_0$ to $v$, $v$ to $w$, and $w$ to $u$, we have $G_0=u=e_0$.
Therefore, the spatial basis $(G_\lambda)$ orthogonal to $u$ spans the same three-space as the $(e_\lambda)$; they are given by
\begin{equation}\label{th-15}
 G_\lambda{}^i = (\delta^i{}_j+A^iw_j+B_ iv_ j)e_\lambda{}^ j,
\end{equation}
with
\begin{eqnarray*}
 A^i &=& c^k(v,w)\left[\delta_k{}^i+u_kc^i(u,w)\right], \\
 B^i &=& \left[\delta_k{}^i+c^i(w,v)w_k+c^i(u,v)u_hc^h(w,v)w_k
     \right] c^k(v,u) - c^i(u,v),
\end{eqnarray*}
where $c$ is defined, for any two four-vectors $a$ and $b$, by
\[ c^i(a,b):=\frac{a^i+b^i}{1-a\cdot b}.
\]

At the lowest order of approximation, in which $u\cdot v=v\cdot w=w\cdot u=-1$, Equation~(\ref{th-15}) reduces to
\[ G_\lambda{}^i \approx
  \left[\delta^i{}_j+\frac12(v^iw_j-w^iv_j)-\frac12u^i(w_j-v_j)
  \right]e_\lambda{}^j,
\] or
\begin{equation}\label{th-16}
 G_\lambda{}^i \approx e_\lambda{}^i
 +\frac12(v^\sigma w_\lambda-w^\sigma v_\lambda)e_\sigma^i.
\end{equation}
$(G_\lambda{}^i)$ are obtained by a spatial rotation of $(G_\lambda{}^i)$. Therefore, the modification of space axes when performing a boost is at the origin of the spatial rotation appearing in the kinematical approach.

We have now the general tools at hand to tackle the three situations mentioned in the Introduction. We, therefore, move to the first example of a non-trivial observer manifold.

\section{First Case Study: The Relativistic Rotation Transformation}\label{sec4}


An approach avoiding the pitfalls of the Galilean rotation transformations
 was proposed as early as 1922 by Franklin~\cite{franklin} and rediscovered by
Trocheris~\cite{trocheris} and Takeno~\cite{takeno}. It will,
therefore, be referred to as the FTT approach; its physical meaning
is discussed in~\cite{K:aflb:96b}, together with its applications to models of
pulsars and to the covariance of Maxwell's equations and references to earlier work. Much of the early discussion started at a time when the concept of general covariance~\cite{kretschmann-17,anderson-chiu} had just been introduced, and its consequences had not all been drawn. For this reason, starting with Langevin~\cite{langevin-21}, it was believed that the rotation transformation should be a change of coordinates. We now know that changes of coordinates are changes of labels and have no physical meaning. In particular, what is sometimes called ``Langevin's metric'' is merely Minkowski's metric expressed in a different system of coordinates, and therefore, any interpretation of measurement in which such coordinates play an essential role is necessarily flawed, which was only beginning to be understood. Another difficulty concerns the definition of the angular velocity parameter.
Essentially, in the Galilean approach, the speed $v=\omega r$ and
the angular velocity $\omega$ are both linearly additive; in the
conventional relativistic approach, the speed $v=\omega r$ obeys
the relativistic law of the composition of velocities, but as a
consequence, $\omega$ is no longer additive, whereas in the FTT
approach, the linear additivity of $\omega$ and the relativistic
law of composition of velocities are both preserved, thanks to the
relation $v=c\tanh (\omega r/c)$.

Trocheris' 1949 derivation is probably the most convenient to recall here. He considers ``as infinitesimal transformation in the neighbourhood of a
point $P$ the Lorentz transformation with velocity $\vecteur{\omega} \times
\vecteur{r}$'', where $\vecteur{\omega}$ is parallel to the $z$ axis. It is
\[
\begin{array}{rl}
dr' = dr, & d\theta' = d\theta - \alpha\, dx^{0},\\
dz' = dz, & dx^{0'} = dx^{0} -\alpha r\,d\theta,
\quad \text{and } \alpha = \omega r/c.
\end{array}
\]
{Since} $dx^{0'}$ is not a perfect differential, he adds {ad hoc} to it
$-2\alpha \theta\, dr$, in order to make the infinitesimal transformation
integrable. The generator of the new transformation is then
\begin{equation}
X^{a} = (-r^{2}\theta /c, 0, -x^{0} /c, 0)
\end{equation}
{Integrating} it, one obtains the relativistic rotation transformation (RRT):
\begin{equation}\label{eq:RRT}
\left\{
\begin{array}{rl}
r' = r, & \theta' = \theta \cosh \alpha - (x^0/r) \sinh \alpha , \\
 z' = z, &
x^{0'} = x^{0} \cosh \alpha - r\theta \sinh\alpha
\text{ with, again, } \alpha = \omega r/c.
\end{array}
\right.
\end{equation}
the initial conditions being assumed to be
$\theta=t=0$, $r$ and $z$ being constant.
For $c\approx\infty$, the RRT reduces to the Galilean rotation transformation to leading order in $1/c$.

Franklin obtained more directly the RRT \eqref{eq:RRT} as a special Lorentz transformation in the variables $x^0$, $r\theta$. Note, however, that both derivations make sense only if \emph{$r$ and $z$ are held constant}. Therefore, the RRT should not be interpreted as a change of coordinates or as a diffeomorphism in Minkowski space, since it really involves two variables and two parameters, not four coordinates.
Since the RRT is determined at $r = \text{const.}$,
and $z = \text{const.}$, it is not a global transformation of $M_4$, and indeed, it is not a Lorentz transformation in $M_4$. We suggest that it defines the tetrad of the local rotating observer, which is part of an observer manifold. The spacetime of the observer is, therefore, flat, but cannot be identified with $M_4$. Intuitively, it is a four-space that touches $M_4$ at one point only. The precise treatment relies on the formulae derived next.

\subsection*{Relativistic Rotational Tetrads}

To achieve the correct interpretation of the RRT, we retain from it only the expression of the four-velocity of the rotating particle and associate with a rotating body the tetrad adapted to it, namely the one in which the vectors along $z$ and $r$ do not change, while the other two, corresponding to the $x^0$ and $r\theta$ directions, are modified as in the RRT. This yields the vectors $\EE_h$ ($h=0,\dots,3$) with the following components:
\begin{equation}\label{eq:Eh}
E_{h}^{a} = \{(\cosh\alpha , 0, \sinh\alpha /r, 0),
    (0, 1, 0, 0),
    (\sinh\alpha, 0, \cosh\alpha /r, 0),
    (0, 0, 0, 1)\}
\end{equation}
\begin{Remark} In the coordinates $(x^{0}, r, r\theta, z)$,
\[
\begin{array}{l}
X_{1}^{a} = (0, -rx^{0}, 0, -r\theta), \\
X_{2}^{a} = (\sinh\alpha , 0, \cosh\alpha , 0) = b^{a},\\
X_{3}^{a} = (\cosh\alpha , 0, \sinh\alpha , 0) = u^{a}
\end{array}
\]
determine a Lie algebra, but it is not that of the Lorentz group; its commutation relations are as~follows:
\[
[X_{1} , X_{2}] = -X_{3} ,\quad
[X_{2} , X_{3}] = 0,\quad
[X_{3} , X_{1}] = X_{2}.
\]
{It} is of Bianchi Type VI.
\end{Remark}
This suggests the following definition:
\begin{Definition}
Let $\mathscr C$ denote a trajectory $s\mapsto x^a(s)$ of a material particle in uniform rotation about an axis, with unit time-like tangent vector $u^a(s)$ in Minkowski space $\cale:=M_4$. Here, a unit vector $\ell^a$ is chosen. It generates the timeline of the laboratory observer in $M_4$. The observer manifold $\calm_R$ relevant to this situation consists of copies of $M_4$, each attached to a different point of the trajectory, together with the frame $(\EE_h)$ given by the formulae \eqref{eq:Eh} above. The transfer map from an observer to the event manifold is the boost sending $u^a$ to $\ell^a$. The traces of the observers on $\cale$ give the trajectory of the particle. In addition, there is a transfer map between the observers with proper times $s$ and $s'$, namely the (unique) Lorentz transformation sending the frame at $s$ to the frame at $s'$.
\end{Definition}
Writing Maxwell's equations in this frame restores the covariance of electrodynamics~\cite{K:aflb:96b}, whereas the Galilean transformation leads to paradoxes that ultimately express that the Galilean rotation transformation does not preserve the Minkowskian structure, whereas the transfer maps on $\calm_R$ do.

\section{Second Case Study: Thomas Precession and the Relativity of Simultaneity}
\label{sec:thomas}

As a relativistic effect yielding even at \emph{small} velocities the factor of two needed to account for multiplet structure in atomic spectra, Thomas precession was a great surprise for the theoretical physicist of the late 1920s, when Thomas~\cite{thomas-26,thomas-27} and, less convincingly, J. Frenkel~\cite{frenkel-26} discovered it, using spin--orbit coupling. Though the normal multiplet structure could be derived through the Dirac equation, the semi-classical treatment was revisited from time to time, since, on the one hand, it provides a physical picture that the quantum treatment is lacking and, on the other hand, the expectation value of the spin operator has the same time dependence as in its classical equation of motion in the non-relativistic case.

Usual explanations of this effect are of two types:
\begin{enumerate}
\item [(a)] The kinematical one, which derives it from the non-commutativity of boosts (Lorentz transformations without rotation), in coordinate representations or in the spinorial formulation.
\item [(b)] The dynamical one, which relies on the covariant generalisation of the equation of motion of a spinning electron in a homogeneous electromagnetic field, the Thomas precession appearing as a consequence of the orthogonality of the spin-vector and the four-velocity of the electron.
\end{enumerate}
{Here}, we trace this precession back to the Relativity of simultaneity, through the systematic use of tetrad bases, and shed, in particular, some light on the reciprocity principle, which has been met with some confusion in the literature.

\subsection{The Kinematical Approach}

This was initiated by L. H. Thomas, who clearly connected the effect to the non-\linebreak commutativity of boosts; this latter property has been shown to be reflected in the Lie algebraic structure of the Lorentz group by V. Lalan~\cite{lalan-37}, although H. Poincar\'e (\cite{poincare-rendiconti-1906}; see also~\cite{poincare-cr-1905}) had already noted it. Again, V. Lalan~\cite{lalan-53} and R. Garnier~\cite{garnier-53} connected the spatial rotation appearing in the product of boosts to parallel transport on the pseudosphere, without mentioning \'E. Borel's still earlier work~\cite{borel-14} on the Lobachevsky--Bolyai velocity space in Special Relativity (see also V. Fock~\cite{fock-59}); these ideas were reformulated in terms of ``physical holonomy'' by H. Urbantke~\cite{urbantke-90} and the spatial rotation computed using Clifford algebra. Iterating the infinitesimal Lorentz transformation, W. H. Furry~\cite{furry-55} derived the Thomas precession at an order next to that of the Relativity of simultaneity and lower than that corresponding to the composition of velocities. The kinematical approach has also been discussed in the spinorial formalism, especially by C. Misner, K. S. Thorne, and J. A. Wheeler~(\cite{MTW}, pp.~175--176, 1118, 1146) and, more recently, by D. Hestenes~\cite{hestenes-74} and D. Hamilton~\cite{hamilton-81}, who states, ``that the usual interpretation is untenable'' and advocates the use of Clifford algebra. Textbooks discussing the kinematical derivation are many; those of C. M{\o}ller (\cite{moller}, pp.~117--125), J. D. Jackson (\cite{jackson}, 11.8 and 11.11), and Misner, Thorne, and Wheeler~\cite{MTW} are informative.

\subsection{The Dynamical Approach}

The motion of a spinning particle in a homogeneous electromagnetic field, first considered by J. Frenkel and discussed by W. A. Kramers (\cite{kramers-qm}, p.~226), was formulated by V. Bargmann, L. Michel, and V. L. Telegdi (BMT)~\cite{BMT}. They derived from the assumed orthogonality of the spin-vector and the four-velocity of the particle the known propagation of the spin-vector (recognised earlier as the Fermi--Walker propagation by F.A.E. Pirani~\cite{pirani:55}, in the case of a spinning particle in a gravitational field). That Thomas' and BMT's approach lead to the same result was confirmed by H. Bacry~\cite{bacry:62}, introducing, however, an additional postulate: ``a moving observer can measure directly the spin-vector in the rest-frame of the particle''. This, of course, is not satisfactory since this is tantamount to assuming at the outset that any inertial observer is equivalent to the observer travelling with the particle.

\subsection{The Main Argument}

The two ways, (a) and (b) above, of accounting for Thomas precession can be traced back to the Relativity of simultaneity, through the systematic use of tetrad bases. Indeed, already at the lowest order in $\beta=v/c$, hypersurfaces of simultaneity are not invariant under a boost, and space axes of two frames related by it do not remain parallel in Minkowski spacetime; thus, they should be rotated after a series of boosts, the product of which leaves the initial time axis invariant. On the other hand, the infinitesimal boost mapping the tangent to the world-line of a particle to the neighbouring one induces the Fermi--Walker transport of space axes. It, then, follows that the above explanations of Thomas precession are simply direct consequences of the lack of a notion of simultaneity common to all observers on the observer manifold. We, therefore, introduce an observer manifold $\calm$ adapted to this situation. The event manifold $\cale$ is Minkowski space.

\begin{Definition}
Let $\mathscr C$ denote a trajectory $s\mapsto x^a(s)$ of a material particle, with unit time-like tangent vector $u^a(s)$ in $\cale:=M_4$. The observer manifold $\calm_T$ relevant to Thomas precession consists of copies of $M_4$, each attached to a different point of the trajectory. The traces of the observers on $\cale$ give the trajectory of the particle. The transfer map from an observer to the event manifold is the boost sending $u^a$ to a fixed time-like unit vector $\ell^a$. Here, again, there is also a transfer map between the observers with proper times $s$ and $s'$, namely the boost sending $u^a(s)$ to $u^a(s')$.
\end{Definition}

\subsection{Infinitesimal Transformations on the Observer Manifold and Fermi--Walker Transport}

We now consider the infinitesimal transformation of the transfer map. It is this form that will yield Thomas precession. We show that it leads to the Fermi--Walker transport of the space triad.
\begin{Theorem}
A vector $p^i=p^a e_a{}^i(s)$ moving along the trajectory $\mathscr C$ (or \emph{comoving vector} for short), with the $p^a$ constant, evolves according to
\begin{equation}\label{th-22}
 \dot p^i = (u^i\dot u_j-u_j\dot u^i)p^j.
\end{equation}
{In} other words, $p^i$ is Fermi--Walker-propagated along the curve tangent to $u(s)$.
\end{Theorem}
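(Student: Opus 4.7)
The plan is to express $\dot p^i$ by differentiating $p^i = p^a e_a{}^i(s)$ with $p^a$ constant, so that everything reduces to computing $\dot e_a{}^i(s)$; then to show that this derivative is given by an antisymmetric operator built from $u$ and $\dot u$, and finally to recognize that operator as the Fermi--Walker generator.

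First I would exploit the transfer map introduced in the definition of $\calm_T$: the tetrad at proper time $s+ds$ is obtained from the tetrad at $s$ by applying the infinitesimal boost $B(u(s+ds), u(s))$, since by construction $e_0(s)=u(s)$ and the boosts are what glue successive observers together along $\mathscr C$. Setting $v = u+du$ in formula \eqref{th-8}, one has $-v\cdot u = 1 - du\cdot u$, and differentiating $u\cdot u = -1$ gives $u\cdot\dot u = 0$, hence $u\cdot du = 0$ to first order and $\gamma = 1 + O(du^2)$. Substituting $v^i = u^i+du^i$ into \eqref{th-8} and keeping only first-order terms, the quadratic piece $(v^i+u^i)(v_j+u_j)/(1+\gamma)$ contributes $2u^iu_j + u^i du_j + du^i u_j$, while $-2v^iu_j$ contributes $-2u^iu_j - 2\,du^i u_j$, so the net correction to $\delta^i{}_j$ is
\begin{equation*}
  B^i{}_j = \delta^i{}_j + u^i\,du_j - du^i\,u_j + O(du^2).
\end{equation*}

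Applying this to each tetrad vector yields $\dot e_a{}^i = (u^i\dot u_j - \dot u^i u_j)\,e_a{}^j$. Multiplying by the constants $p^a$ and summing gives
\begin{equation*}
  \dot p^i = p^a\dot e_a{}^i = (u^i\dot u_j - u_j\dot u^i)\,p^a e_a{}^j = (u^i\dot u_j - u_j\dot u^i)\,p^j,
\end{equation*}
which is \eqref{th-22}. As a consistency check, for $a=0$ the vector $e_0{}^i = u^i$ should satisfy $\dot u^i = (u^i\dot u_j - u_j\dot u^i)u^j$; the right-hand side equals $u^i(u^j\dot u_j) - (u_j u^j)\dot u^i = 0 - (-1)\dot u^i = \dot u^i$, using the orthogonality $u\cdot\dot u = 0$ and normalization $u\cdot u=-1$ noted above, so the formula is compatible with the evolution of the four-velocity itself.

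Finally, I would observe that the operator $T^i{}_j := u^i\dot u_j - u_j\dot u^i$ is precisely the generator of Fermi--Walker transport along the worldline with tangent $u$ and acceleration $\dot u$; the equation $\dot p^i = T^i{}_j p^j$ is the textbook form of that transport law (which reduces to parallel transport when $\dot u=0$, i.e.\ along a geodesic). The main technical point is the first-order expansion of \eqref{th-8}, where one must handle the $1/(1+\gamma)$ factor correctly and use $u\cdot du = 0$; beyond this, the argument is purely algebraic and relies only on the geometric content of axioms (M) and the transfer-map structure of Definition on $\calm_T$.
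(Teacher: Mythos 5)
Your proof is correct and follows essentially the same route as the paper: linearise the boost \eqref{th-8} about $v=u+\delta u$ using $u\cdot\delta u=0$ to obtain the infinitesimal boost $b^i{}_j=\delta^i{}_j+u^i\delta u_j-u_j\delta u^i$, apply it to the tetrad with $\delta u=\dot u\,ds$, and contract with the constant components $p^a$. You in fact carry out explicitly the first-order expansion of \eqref{th-8} that the paper only asserts, and your consistency check on $e_0=u$ is a welcome addition.
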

\begin{proof} Quite generally, the boost $B$ computed earlier determines an infinitesimal boost $b$ when $v=u+\delta u$, with $u\cdot\delta u=0$ (which ensures $v\cdot v=-1$); we have
\begin{equation}\label{th-17}
 b^i{}_j = \delta^i{}_j+(u^i\delta u_j-u_j\delta u^i),
\end{equation}
and the $b$-transforms $E_a{}^i$ of $e_a{}^i$ are
\begin{equation}\label{th-18}
 E_a{}^i = e_a{}^i + (u^i\delta u_j-u_j\delta u^i)e_a{}^j.
\end{equation}

When the four-velocity increment $\delta u$ is equated with $\dot u ds$, we obtain
\begin{equation}\label{th-21}
 \delta e_a{}^i = \dot e_a{}^i ds =(u^i\dot u_j-u_j\dot u^i)ds,
\end{equation}
so that, by the boost sending $u$ to $u+\dot u ds$, the space axes $(e_\lambda{}^i)$ are Fermi--Walker-propagated along the world-line of the origin of the moving frame with variable velocity $u^i(s)$.

Now, a comoving vector $p^i=p^a e_a{}^i$ is transformed into $p^i+\dot p^ids$, so that
\begin{equation}
 \dot p^i = (u^i\dot u_j-u_j\dot u^i)p^j,
\end{equation}
in other words, $p^i$ is Fermi--Walker-propagated along the line tangent to $u(s)$, as desired.

Let $B+\dot B ds$ be the boost sending $u+\dot u ds$ to $v^i=B^i{}_ku^k$; we have
\begin{eqnarray}
\dot v^i &=& B^i{}_k\dot u^k+\dot B^i{}_ k u^k=0, \label{th-23a}\\
\dot E_\lambda{}^i &=& \Omega^i{}_ jE^j_\lambda, \\
\text{ where } \Omega^i{}_ j &=&
 \left[B^i{}_ k(u^k\dot u_h-u_h\dot u^k)+\dot B^i{}_h
 \right] (B^{-1})^h{}_ j, \label{th-23c}
\end{eqnarray}
so that $\Omega^i{}_ k v^k=0$, by virtue of Equation~(\ref{th-23a}). After substituting the values of $B$, $\dot B$, and $B^{-1}$ into Equation~(\ref{th-23c}), we obtain
\begin{equation}\label{th-24}
 \Omega^i{}_ j =
 \left[\dot\gamma(v^i u_j-v_j u^i)+(u^i\dot u_j-u_j\dot u^i)
   -\gamma(v^i\dot u_j-v_j\dot u^i)
 \right](1+\gamma)^{-1}.
\end{equation}
The $B$-transforms $P^i$ of $p^i$ satisfy
\begin{equation}\label{th-25a}
\dot{\tilde P}^i = p^\rho\dot E_\rho{}^i=\Omega^i{}_k\tilde P^k,
\text{ where }\tilde P^i=(\delta^i{}_ k+v^iv_k)P^k,
\end{equation}
or
\begin{equation}\label{25b}
 \dot P^\lambda = \Omega^\lambda{}_\sigma P^\sigma,
\end{equation}
with
\begin{equation}\label{th-25c}
 \Omega^\rho{}_ \sigma=\Omega^i{}_j E_i{}^\rho E_\sigma{}^j
    =\frac{\gamma^2}{1+\gamma}(\beta^\rho\Gamma_\sigma-\beta_\sigma\Gamma^\rho),
\end{equation}
where we have used the relation $u^iE_ i{}^\rho=\gamma\beta^iE_i^\rho$ and have set $\Gamma^\rho := \dot\beta^iE_i{}^\rho$.

It follows that any comoving vector $p$, \emph{whether orthogonal to $u$ or not}, is Fermi--Walker-propagated along the $u$-line, and its image by the boost $B$ has its spatial part rotated by $\Omega^\rho{}_\sigma$ with respect to $(E_\lambda)$. This property is, therefore, not restricted to the spin-vector, as it would appear from the BMT approach.
\end{proof}

\subsection{Covariant Form of the Equations}

Consider the motion of a charged spinning electron in a homogeneous electromagnetic field $F^{ij}$. The Bargmann--Wigner polarisation vector $s^i$ attached to it is orthogonal to its four-velocity $u$:
\begin{equation}\label{th-26}
 u\cdot s:=u^as_a=0,
\end{equation}
and determines its magnetic moment:
\begin{equation}\label{th-27}
 w^i = (ge/2m) s^i,
\end{equation}
where $g$ is the Land\'e factor. With respect to $(e_a)$, the Larmor precession is given by
\begin{equation}\label{th-28}
 \dot s_L{}^i = - \tilde F^{ij}w_j,\text{ or }
 \dot s_L{}^\lambda = - \tilde F^{\lambda\sigma}w_\sigma,
\end{equation}
where $\tilde F^{ij}=(\delta^i{}_k+u^iu_k)(\delta^j{}_h+u^ju_h)F^{kh}$. On the other hand, the accelerated motion of the electron due to the Lorentz force is given by
\begin{equation}\label{th-29}
 \dot u^i = -(e/m)F^i{}_ku^k
\end{equation}
and according to Equation~(\ref{th-22}), the spin-vector is Fermi--Walker-propagated so that we obtain a contribution to the precession equal to
\begin{equation}\label{th-30}
 \dot s_{FW}{}^i=-(e/m)(u^iF_k{}^h-F^{ih}u_k)u_hs^k.
\end{equation}
It follows that the total precession is
\begin{equation}\label{th-31}
 \dot s^i = -(e/2m)\left[gF^{ik}+(g-2)F^{hk}u^iu_h\right]s_k.
\end{equation}
Equation~(\ref{th-31}) is the BMT equation where the additional term responsible for the Thomas precession is now simply accounted for by the Fermi--Walker propagation of comoving space axes during the motion of the electron; it is not a consequence of the orthogonality relation~(\ref{th-26}).

\subsection{Evaluation in the Atom's Rest Frame}

 In applications to atomic spectra, one is actually interested in evaluating the precession in the rest frame of the atomic nucleus (\cite{eisberg-67}, p.~340), which we take as frame $(E_a)$. With respect to $(E_a)$, the motion of the electron is given by
 \begin{equation}\label{th-32}
 \dot u^iE_i{}^\lambda = -(e/m)(F^{\lambda 0}+F^{\lambda\sigma}\beta_\sigma)
 \end{equation}
where $F^{ab}=F^{ij}E_i{}^aE_j{}^b$ and $u^i=\gamma(E_0{}^i+\beta^\sigma E_\sigma{}^i)$.

In terms of $E^\lambda=F^{\lambda 0}$ and $B^\lambda=\frac12\ep^{\lambda\rho\sigma}F^{\rho\sigma}$, Thomas precession, given by Equation~(\ref{th-25c}), becomes
\begin{equation}\label{th-32bis}
\Omega^\lambda{}_\rho=-\frac{\gamma^2}{1+\gamma}
\ep^\lambda{}_{\rho\sigma}
\left[(\beta\times E)^\sigma+(\beta\times(\beta\times B))^\sigma\right],
\end{equation}
where $(a\times b)^\lambda=\ep^{\lambda\rho\sigma}a_\rho b_\sigma$ is the usual vector product.

Hence, the total precession is
\begin{eqnarray}
\omega^\lambda{}^\rho & = & -(e/2m)\ep^\lambda{}_{\rho\sigma}
\left[gB^\sigma-(\beta\times E)^\sigma
\right]/(1+\gamma) \nonumber\\
 & & \mbox{}+(g-2)(e/2m)\ep^\lambda{}_{\rho\sigma}
\left[\frac{\gamma^2}{1+\gamma}(\beta\times(\beta\times B))^\sigma+\gamma(\beta\times E)^\sigma
\right],
\label{th-33}
\end{eqnarray}
as in the usual approaches.

\begin{Remark} For the dotted derivative $\dot Q$ of a quantity $Q$ being defined as $u^i\pa_i Q$, the time derivative in the rest frame of the atom is obtained by dividing through by $\gamma$: $(d/dt)Q=\dot Q/\gamma$.
\end{Remark}

\begin{Remark} If Cartesian coordinates in $M_4$ are adopted, with $e_a{} ^i=\delta_a{}^i$, we recover the usual expression for the Thomas precession.
\end{Remark}

\section{Third Case Study: Charged Particle in a Constant Electromagnetic Field}\label{sec6}

We revisit the motion of a charged particle in an electromagnetic field and show how to associate with it an observer manifold. In this case, the particle dynamics determine a special frame attached to the particle, which naturally defines a splitting of the tangent space into orthogonal subspaces a two-dimensional Minkowski space and a two-dimensional space. The observer manifold is here determined by the dynamics. Let $\mathbf{u}=u^a\partial_a$, with $u^a=dx^a/ds$, be the time-like unit vector tangent to the world-line $(L)$ of a particle in the
spacetime $M_4$ endowed with the metric $\eta_{ab}$. We relied on an earlier paper~\cite{K:aflb:03}, to which the reader is referred for background information; in particular, the Frenet frame is closely related to the Franklin rotation transformation. The notation used here also follows this paper.

The dynamics generate a Frenet--Serret tetrad
$\{\mathbf{u},\mathbf{n},\mathbf{b},\mathbf{c}\}$ determined by the integration of the ordinary differential equations:
\begin{equation}\label{eq:FS-s}
 \dot{\mathbf{u}} = a\mathbf{n},\quad
 \dot{\mathbf{n}} = a\mathbf{u} + \tau \mathbf{b},\quad
 \dot{\mathbf{b}} = -\tau \mathbf{n} + \sigma \mathbf{c},\quad
 \dot{\mathbf{c}} = -\sigma \mathbf{b},
\end{equation}
where the dot denotes $d/ds$; the components of $\mathbf{u}$ are $u^a$ and similarly for the others. These four-vectors are mutually
orthogonal and form a right-handed frame; we have
\[
u^au_a=-1,\quad n^an_a=b^ab_a=c^ac_a=1.
\]
{Here}, $a$, $\tau$ and $\sigma$ are, respectively, the curvature,
torsion, and third curvature of $(L)$. The curvature and torsion
are nonnegative; we let $\ep$ denote the sign of $\sigma$ (equal
to $-1$ if \mbox{$\sigma<0$}, $+1$ otherwise). The Frenet frame may be
thought of as the result of Gram--Schmidt orthonormalisation
applied to the first four derivatives (with respect to $s$)
of the position~${\mathbf x}$.

Since the electromagnetic field is constant, so are the generalised curvatures $a$, $\tau$, and $c$. One might, therefore, feel that it suffices to compute the exponential of a matrix to solve the problem entirely~\cite{K:aflb:03,synge,scarr-friedman-acc}; incidentally, since some authors view such particles as having uniform acceleration~\cite{scarr-friedman-acc}, we must point out that this is debatable~\cite{K-cr-65b}, even though a full discussion would take us too far. Coming back to our main concern, while the solutions may be computed in terms of the arc length parameter $s$ by investigating the eigenvalues of the matrix $S$ below, the problem has additional structure: the Frenet frame determines a distinguished tetrad in which the last two vectors are decorrelated from the first two. It follows that the problem generates not only a frame, but a distinguished splitting of Minkowski space into two planes. We shall use this to obtain a new example of an observer manifold by taking this tetrad as defining the local observer.

The eigenvalues of the matrix
\[S:=(S_A{}^B)=\left[
\begin{matrix}
\quad 0\quad & a & 0 & 0 \\
a & \quad 0\quad & \tau & 0 \\
0 & -\tau & \quad 0\quad & \sigma \\
0 & 0 & -\sigma & \quad 0\quad
\end{matrix}
\right]
\]
solve
\begin{equation}\label{eq:roots-s}
 (\lambda^2-a^2)(\lambda^2+\tau^2+\sigma^2)+(a\tau)^2=0.
\end{equation}
{This} equation has four roots: $\pm\chi$, $\pm i \omega$, where the
nonnegative real numbers $\chi$ and $\omega$ are given by
\begin{eqnarray}
\omega^2 &=& -\frac12[a^2-\tau^2-\sigma^2+\Delta], \label{eq:roots-1}\\
\chi^2 &=& -\frac12[a^2-\tau^2-\sigma^2-\Delta], \text{ with } \label{eq:roots-2}\\
\Delta &=& \sqrt{(a^2-\tau^2-\sigma^2)^2+4a^2\sigma^2}. \label{eq:roots-3}
\end{eqnarray}
{The} relations on the sum and
product of roots of the equation yield
\begin{equation}\label{eq:roots-s1}
\chi^2-\omega^2 = a^2-\tau^2-\sigma^2;\quad
a\sigma=\ep\omega\chi.
\end{equation}
{Note} also that
\[ \chi^2+\omega^2=\Delta.
\]
{From} Equation \eqref{eq:roots-s}, with $\lambda=\chi$, it is apparent that $\chi^2\leq a^2$. Equation \eqref{eq:roots-s1} then yields $\omega^2=\tau^2+\sigma^2+\chi^2-a^2$; hence,
$\omega^2\leq\tau^2+\sigma^2$. It, therefore, makes sense to define
two positive numbers $\Gamma$ and $\Lambda$ by
\begin{equation}
 \Gamma^2=\frac{a^2+\omega^2}{\chi^2+\omega^2};\quad
 \Lambda^2=\frac{a^2-\chi^2}{\chi^2+\omega^2},
\end{equation}
so that
\[ \Gamma^2-\Lambda^2=1.
\]
{Using} these relations, we obtain
$(\Gamma\chi)^2+(\Gamma^2-1)\omega^2=a^2$; hence,
\begin{equation} a^2=\Gamma^2\chi^2+\Lambda^2\omega^2.
\end{equation}
{Also},
$(\Gamma\Lambda)^2(\chi^2+\omega^2)^2=(a^2+\omega^2)(a^2-\chi^2)
=(a^2+\omega^2)(\tau^2+\sigma^2-\omega^2)=(a\tau)^2$. (Equation~(\ref{eq:roots-s}) has
been used to obtain the last equality.) Therefore,
\begin{equation}
a\tau=\Gamma\Lambda(\chi^2+\omega^2).
\end{equation}
{The} following relations are also useful.
\begin{eqnarray}
(\chi\Gamma)^2(\chi^2+\omega^2) &=&
\chi^2(\omega^2+a^2)=a^2(\sigma^2+\chi^2) \\
(\omega\Lambda)^2(\chi^2+\omega^2) &=&
\omega^2(a^2-\chi^2)=a^2(\omega^2-\sigma^2).
\end{eqnarray}

We assume now that $a$, $\tau$, and $\sigma$ are constant. We prove the following.
\begin{Theorem}\label{th:frame}
The vectors:
\begin{eqnarray}
f_0 &=& \Gamma\uu+\Lambda\bb; \\
f_1 &=& \Lambda\uu+\Gamma\bb; \\
f_2 &=& a^{-1}[\Gamma\chi\nn+\ep\Lambda\omega\cc]; \\
f_3 &=& a^{-1}[-\ep\Lambda\omega\nn+\Gamma\chi\cc]
\end{eqnarray}
form a right-handed orthonormal frame such that
\begin{eqnarray}
\ddot f_0-\chi^2 f_0 &=& \ddot f_1+\omega^2 f_1 = 0;\\
\ddot f_2-\chi^2 f_2 &=& \ddot f_3+\omega^2 f_3 = 0.
\end{eqnarray}
\end{Theorem}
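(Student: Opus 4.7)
The plan is to verify the three claims (orthonormality, right-handedness, and the two pairs of second-order ODEs) by direct computation using the Frenet--Serret equations~\eqref{eq:FS-s} together with the algebraic identities already established for the quantities $\chi,\omega,\Gamma,\Lambda$.

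First I would dispatch orthonormality. Because $\{\mathbf u,\mathbf n,\mathbf b,\mathbf c\}$ is an ONT with $\mathbf u$ time-like, the pair $(f_0,f_1)$ lies in $\mathrm{span}(\mathbf u,\mathbf b)$ and the pair $(f_2,f_3)$ lies in $\mathrm{span}(\mathbf n,\mathbf c)$, so the cross products $f_{\{0,1\}}\cdot f_{\{2,3\}}$ vanish automatically. Inside the $(\mathbf u,\mathbf b)$-plane one reads off $f_0\cdot f_0=-\Gamma^2+\Lambda^2=-1$, $f_1\cdot f_1=-\Lambda^2+\Gamma^2=1$, and $f_0\cdot f_1=-\Gamma\Lambda+\Lambda\Gamma=0$ by virtue of $\Gamma^2-\Lambda^2=1$. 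Inside the $(\mathbf n,\mathbf c)$-plane the key identity is $a^2=\Gamma^2\chi^2+\Lambda^2\omega^2$, which yields $f_2\cdot f_2=f_3\cdot f_3=1$, while $f_2\cdot f_3$ vanishes because the $\ep$-terms cancel. Right-handedness then reduces to computing the determinant of the $4\times 4$ transition matrix from $(\mathbf u,\mathbf n,\mathbf b,\mathbf c)$ to $(f_0,f_1,f_2,f_3)$; after reordering columns it block-diagonalises into a $(\mathbf u,\mathbf b)$-block with determinant $\Gamma^2-\Lambda^2=1$ and an $(\mathbf n,\mathbf c)$-block with determinant $(\Gamma^2\chi^2+\Lambda^2\omega^2)/a^2=1$, up to a sign that is tracked through the permutation and the factor $\ep$.

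Next I would handle the ODEs. Iterating \eqref{eq:FS-s} once gives
\begin{equation*}
\ddot{\mathbf u}=a^2\mathbf u+a\tau\mathbf b,\quad
\ddot{\mathbf b}=-a\tau\mathbf u-(\tau^2+\sigma^2)\mathbf b,\quad
\ddot{\mathbf n}=(a^2-\tau^2)\mathbf n+\tau\sigma\mathbf c,\quad
\ddot{\mathbf c}=\sigma\tau\mathbf n-\sigma^2\mathbf c,
\end{equation*}
so each of the two planes $\mathrm{span}(\mathbf u,\mathbf b)$ and $\mathrm{span}(\mathbf n,\mathbf c)$ is preserved by the second derivative, confirming that the correct ansatz is one linear combination from each plane. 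Plugging $f_0=\Gamma\mathbf u+\Lambda\mathbf b$ into $\ddot f_0=\chi^2 f_0$ produces two scalar equations on the coefficients of $\mathbf u$ and $\mathbf b$; using $a\tau=\Gamma\Lambda(\chi^2+\omega^2)$, the identity $\Lambda^2(\chi^2+\omega^2)=a^2-\chi^2$, and $\Gamma^2(\chi^2+\omega^2)=a^2+\omega^2$ together with $\chi^2-\omega^2=a^2-\tau^2-\sigma^2$, both equations collapse to $\chi^2$, as required. The verification for $f_1$ is identical after swapping $\Gamma\leftrightarrow\Lambda$ and replacing $+\chi^2$ by $-\omega^2$. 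For $f_2$ and $f_3$ the same procedure uses the auxiliary relations $(\chi\Gamma)^2(\chi^2+\omega^2)=a^2(\sigma^2+\chi^2)$ and $(\omega\Lambda)^2(\chi^2+\omega^2)=a^2(\omega^2-\sigma^2)$, together with $a\sigma=\ep\,\omega\chi$ to absorb the sign $\ep$ that couples the $\mathbf n$- and $\mathbf c$-components.

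I expect the only real obstacle to be bookkeeping: there are four linear combinations to test against two decoupled harmonic-type equations, and in each case one must recognise the relevant algebraic identity among the six scalars $a,\tau,\sigma,\chi,\omega,\ep$ and the two derived parameters $\Gamma,\Lambda$. The strategy is therefore first to list all second derivatives in the two invariant planes, then to isolate the $\mathbf u$- and $\mathbf b$-components (respectively the $\mathbf n$- and $\mathbf c$-components) and match them term by term, rewriting $a\tau$ and $a\sigma$ through the sum/product identities coming from \eqref{eq:roots-s}. Once the ODE for $f_0$ is settled, the other three follow by formally identical manipulations, so no new identity beyond those already displayed in the preceding paragraphs of the paper is needed.
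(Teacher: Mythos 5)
Your proposal is correct and follows essentially the same route as the paper, whose proof consists precisely of the ``direct computation of the derivatives'' you outline, using the Frenet--Serret equations and the algebraic identities relating $a,\tau,\sigma,\chi,\omega,\Gamma,\Lambda$. The only economy you miss is the paper's observation that $\dot f_0=\chi f_2$ and $\dot f_1=\ep\omega f_3$, which lets the equations for $f_2,f_3$ be deduced from those for $f_0,f_1$ by one further differentiation rather than being verified independently in the $(\nn,\cc)$-plane.
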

This theorem is proven by direct computation of the derivatives. We note
that the last two relations follow from the first, because one can
check directly that
\[ \dot f_0=\chi f_2;\quad \dot f_1=\ep\omega f_3.
\]
{The} vector $f_0$ has the following interpretation. Let
\[ y^a = x^a +a \omega^{-2}n^a.
\]
{The,}
\[ \frac{d\mathbf y}{ds} = \frac {a\tau}{\Lambda\omega^2}f_0.
\]
{The} vectors $f_k$ may also be obtained differently: one has
\begin{eqnarray}
\ddot \uu +\omega^2 \uu &=& \frac{a\tau}\Lambda f_0;\\
\ddot \uu -\chi^2 \uu &=& \frac{a\tau}\Gamma f_1;\\
\ddot \nn +\omega^2 \nn &=& \frac{\Gamma\chi}a (\chi^2+\omega^2) f_2
=\chi\frac\tau\Lambda f_2;\\
\ddot \nn -\chi^2 \nn &=& \ep\frac{\Lambda\omega}a (\chi^2+\omega^2) f_3
=\ep\omega\frac\tau\Gamma f_3.
\end{eqnarray}
The properties of the frame $(f_k)$ suggest the following definition.
\begin{Definition}\label{def:obsmanif-c}
Let $\mathscr C$ denote a trajectory $s\mapsto x^a(s)$ of a material particle in an electromagnetic field, with unit time-like tangent vector $u^a(s)$ in $\cale:=M_4$. The observer manifold $\calm_R$ relevant to this situation consists of copies of $M_4$, each attached to a different point of the trajectory, together with the frame given $(f_0,f_1,f_2,f_3)$ defined by the formulae of Theorem~\ref{th:frame}. The traces of the observers on $\cale$ give the trajectory of the particle. The transfer map from an observer to the event manifold is the boost sending $u^a$ to a fixed time-like unit vector $\ell^a$. The transfer map from the observers with proper times $s$ and $s'$ is the (unique) Lorentz transformation sending the frame at $s$ to the frame at $s'$.
\end{Definition}

\section{Electromagnetic Field Components}\label{sec7}

While we have focused on a constant field so far, formulae involving the Frenet--Serret tetrad and, therefore, the observer frame of Definition~\ref{def:obsmanif-c}, in terms of the field derivatives with respect to the arc length, using the formulae:
\[ \dot F_{ab} = u^c\nabla_c F_{ab}
\]
and
\[ \ddot F_{ab}=u^du^c\nabla_d\nabla_c F_{ab}+an^c\nabla_c F_{ab}.
\]
{Higher} derivatives could be computed in the same way.
We give here explicit formulae for the electric and magnetic field components, defined by
\begin{align*}
E^a &= F^{ab}u_b\\
B^a &= -\overset{*}{F}{}_{ab}u^b, \text{ with}\\
\overset{*}{F}{}_{ab} &=\frac 12 \eta_{abcd}F^{cd},
\end{align*}
where, as usual, $\eta_{abcd}=\sqrt{-g}\ep_{abcd}$ (with, here, $-g=1$).
In other words, for every index $n$,
\[ B_n = -\frac12\eta_{npqr}u^pF^{qr};
\]
hence,
\begin{align*}
\eta^{abmn}u_mB_n &=\frac12\ep^{abmn}u_m\ep_{npqr}u^pF^{qr}\\
&= -\frac12\delta^{abm}_{pqr}u_mu^pF^{qr}\\
&= F^{ab}-(u^aF^{bm}-u^bF^{am})u_m.
\end{align*}
{It} follows that $F^{ab}$ may, in turn, be recovered from its electric and magnetic parts.
\begin{equation}
F^{ab}=\eta^{abmn}u_mB_n+u^aE^b-u^bE^a.
\end{equation}
{In} the sequel, $\EE$ has components $E^a$, and similarly for $\mathbf B$, and expressions such as $\nn\ddot\FF\uu$ are short for contractions such as $n^a\ddot F_{ab}u^b$.
\begin{Theorem}
In any e-m field, if $a\tau\neq 0$,
\[ k\EE = a\nn\text{ and }k\mathbf B=B_1\nn+B_2\bb+B_3\cc
\]
with $k=Q/mc$,
\begin{eqnarray*}
B_1 &=& \frac\sigma k-\frac 1{a\tau}\left[
2a\cc\dot\FF\nn+\cc\ddot\FF\uu-\dot a B_2\right],
\\
B_2 &=& \frac 1a\cc\dot\FF\uu,
\end{eqnarray*}
and
\[ B_3 = \frac\tau k-\frac 1a \bb\ddot\FF\uu.
\]
{In} a constant field, $B_2=0$, $B_3 = \tau/k$, and $B_1 = \sigma/k$.
\end{Theorem}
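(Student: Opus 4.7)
The plan is to exploit the Lorentz force law $\dot u^a = k F^{ab} u_b = k E^a$ together with the Frenet--Serret system. Equating $\dot u = a\mathbf n$ (Frenet) with $\dot u = k\mathbf E$ yields $k\mathbf E = a\mathbf n$ at once, and also the orthogonality of $\mathbf E$ to $\mathbf b$ and $\mathbf c$. The magnetic vector satisfies $\mathbf B \cdot \mathbf u = 0$ by the antisymmetry of $\overset{*}{F}$, so it lies in the three-plane spanned by $\{\mathbf n, \mathbf b, \mathbf c\}$; the decomposition $k\mathbf B = B_1 \mathbf n + B_2 \mathbf b + B_3 \mathbf c$ therefore unambiguously defines the three scalars.

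To extract them, I would differentiate the Lorentz law successively along the worldline and project onto the spatial Frenet triad. One derivative gives $\ddot u^a = k\dot F^{ab}u_b + ak F^{ab}n_b$, while Frenet yields $\ddot u = a^2\mathbf u + \dot a\,\mathbf n + a\tau\,\mathbf b$. Projecting on $\mathbf c$ (Frenet side vanishes) and on $\mathbf b$ (Frenet side $=a\tau$) produces two scalar relations between $\mathbf c \dot F \mathbf u$, $\mathbf c F \mathbf n$, $\mathbf b \dot F \mathbf u$ and $\mathbf b F \mathbf n$. These $F$-contractions are then converted into components of $\mathbf B$ via the identity
\[
F^{ab} = u^a E^b - u^b E^a + \eta^{abmn} u_m B_n,
\]
whose Levi--Civita piece, evaluated on the Frenet orthonormal tetrad, reduces each contraction to a single scalar coefficient of $k\mathbf B$. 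This yields $B_2 = \mathbf c \dot F \mathbf u/a$ directly and, after a parallel computation on $\mathbf b$, the stated formula for $B_3$; matching the $\ddot F$-form of $B_3$ requires one additional manipulation, using the differentiated identity $\mathbf b \cdot \mathbf E = 0$ to trade a $\dot F$-term for a $\ddot F$-term. Finally, $B_1$ is obtained from the third derivative: writing
\[
\dddot u^a = k\ddot F^{ab}u_b + 2ak\dot F^{ab}n_b + \dot a\, k F^{ab}n_b + a^2 k F^{ab}u_b + a\tau k F^{ab}b_b,
\]
and using $\dddot u = 3a\dot a\,\mathbf u + (a^3+\ddot a - a\tau^2)\mathbf n + (2\dot a\tau + a\dot\tau)\mathbf b + a\tau\sigma\,\mathbf c$ from Frenet, the projection on $\mathbf c$ gives $a\tau\sigma$ on the left and a linear combination of $\mathbf c \ddot F \mathbf u$, $\mathbf c \dot F \mathbf n$, $\mathbf c F \mathbf n$ (already identified with $B_2$), $\mathbf c F \mathbf u = \mathbf c \cdot \mathbf E = 0$, and $\mathbf c F \mathbf b$ on the right. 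Since $\mathbf c F \mathbf b$ in turn reduces to a multiple of $B_1$, dividing by $a\tau$ (valid under the hypothesis $a\tau \neq 0$) isolates $B_1$ and produces the claimed expression. The constant-field corollary is immediate: $\dot F = \ddot F = 0$ gives $\dot a = 0$ via the identity $\dot a = k\mathbf n \dot F \mathbf u$ (itself the $\mathbf n$-projection of $\ddot u$), so $B_2 = 0$, $B_3 = \tau/k$, and $B_1 = \sigma/k$.

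\textbf{Main obstacle.} The strategy is clean; the real work is the bookkeeping. One must track with care the signs produced by the Levi--Civita tensor $\eta^{abcd}$ on the Lorentzian Frenet tetrad and the factors of $k$ that propagate through the rescaled identifications $k\mathbf E = a\mathbf n$ and $k\mathbf B = B_1\mathbf n + B_2\mathbf b + B_3\mathbf c$. The substitution exchanging $\dot F$ for $\ddot F$ in the formula for $B_3$, which has no analog in $B_2$, is probably the most intricate step, since it relies on differentiating an auxiliary orthogonality relation and on the vanishing of $\mathbf b F \mathbf u$ and $\mathbf b F \mathbf b$ to eliminate intermediate terms.
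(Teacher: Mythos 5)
Your proposal follows essentially the same route as the paper: the Lorentz force law combined with the Frenet--Serret equations gives $k\EE=a\nn$ and $\BB\cdot\uu=0$; the decomposition $F^{ab}=\eta^{abmn}u_mB_n+u^aE^b-u^bE^a$ identifies $\cc\FF\bb$, $\nn\FF\cc$, $\bb\FF\nn$ with $B_1$, $B_2$, $B_3$; and the projections of $\ddot\uu$ and $\dddot\uu$ on the tetrad then yield the stated formulae, with $a\tau\neq0$ used only to isolate $B_1$. One caveat: the ``additional manipulation'' you propose for $B_3$ is chasing what appears to be a typo in the statement. The paper's own proof obtains $aB_3+\bb\dot\FF\uu=a\tau/k$, i.e.\ $B_3=\tau/k-\frac1a\,\bb\dot\FF\uu$ with a \emph{single} dot; differentiating $\bb\cdot\EE=0$ (using $\dot\bb=-\tau\nn+\sigma\cc$, $\dot\uu=a\nn$, $\cc\FF\uu=0$ and $\nn\FF\uu=a/k$) merely reproduces that same relation and cannot generate a $\ddot\FF$ term, so that step of your plan would fail --- but it is also unnecessary, since the single-dot formula is what the argument actually proves and the constant-field corollary is unaffected.
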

\begin{proof}
Let us write
\[ \BB=B_0\uu+B_1\nn+B_2\bb+B_3\cc.
\]
{First} of all, $\BB$, like $\EE$, is always orthogonal to $\uu$.
Therefore, $B_0=0$.
Next, from the expression of $F^{ab}$ in terms of the fields $\EE$ and
$\BB$, we have, using the definition $\eta^{abmn}=-\frac 1{\sqrt{-g}}\ep^{abmn}$ and the fact that the Frenet--Serret tetrad is a right-handed frame by the
choice of $\cc$ (this is why $\sigma$ has a sign),
\begin{align*}
c^aF_{ab}b^b &= \eta^{abmn}c_ab_bu_mB_n=B_1\\
n^aF_{ab}c^b &= \eta^{abmn}n_ac_bu_mB_n=B_2\\
b^aF_{ab}n^b &= \eta^{abmn}b_an_bu_mB_n=B_3.
\end{align*}
{By} repeated differentiations, we obtain
\begin{eqnarray*}
\dot\uu &=& k\FF\uu=a\nn
\\
\ddot\uu &=& k(a\FF\nn+\dot\FF\uu)
\\
&=& a(a\uu+\tau \bb)+\dot a\nn
\\
\dddot\uu &=& k\left[ \FF(\dot a\nn+a(a\uu+\tau \bb))+2\dot\FF(a\nn)+\ddot\FF\uu
\right]
\\
&=& (d/ds)\left[a^2\uu+a\tau\bb+\dot a\nn\right]
\\
&=& a^3\nn+2a\dot a\uu+a\tau(-\tau\nn+\sigma\cc)+d(a\tau)/ds\bb+\ddot a\nn+\dot
a(a\uu+\tau\bb)
\\
&=& 3a\dot a\uu+(\ddot a+a^3-a\tau^2)\nn+(2\dot a\tau+a\dot\tau)\bb+a\tau\sigma\cc.
\end{eqnarray*}
{Projecting} on the FS tetrad, we obtain
\[ k\nn\FF\uu=a;\quad \uu\FF\uu=\bb\FF\uu=\cc\FF\uu=0.
\]
{The} relations involving $\ddot\uu$ yield, in addition to the
known relation $\nn\FF\uu=a/k$,
\begin{eqnarray*}
\nn\dot\FF\uu &=& \dot a/k \\
a\bb\FF\nn+\bb\dot\FF\uu &=& a\tau/k \\
a \cc\FF\nn+\cc\dot\FF\uu &=& 0.
\end{eqnarray*}
{The} second of these equations reads
\[ aB_3+\bb\dot\FF\uu = a\tau/k,
\]
hence the value of $B_3$. Similarly, the third of these relations
yields
\[ aB_2=\cc\dot\FF\uu.
\]
{Thus}, $B_2$ and $B_3$ may be determined from the field components on the Frenet--Serret tetrad.

To determine $B_1$, we considered the relations derived by the projection of the third
derivative; the projection on $\uu$ yields the same information as the projection of the
second derivative; taking the scalar product with the other three tetrad vectors gives:
\begin{eqnarray*}
k\{ a^2\nn\FF\uu+a\tau\nn\FF\bb+\nn\ddot\FF\uu\}
&=& \ddot a+a(a^2-\tau^2) \\
k\{\dot a\bb\FF\nn+a^2\bb\FF\uu+2a\bb\dot\FF\nn+\bb\ddot\FF\uu\}
 &=& 2\dot a\tau+a\dot\tau \\
k\{\dot a\cc\FF\nn+a^2\cc\FF\uu+a\tau\cc\FF\bb+2a\cc\dot\FF\nn+\cc\ddot\FF\uu\}
&=& a\tau\sigma.
\end{eqnarray*}
{These} equations simplify to
\begin{eqnarray*}
-a\tau B_3+\nn\ddot\FF\uu &=& [\ddot a-a\tau^2]/k \\
\dot aB_3+2a\bb\dot\FF\nn+\bb\ddot\FF\uu
&=& [2\dot a\tau+a\dot\tau]/k \\
-\dot aB_2+a\tau B_1+2a\cc\dot\FF\nn+\cc\ddot\FF\uu
&=& a\tau\sigma/k.
\end{eqnarray*}
{Rearranging}, this may also be written as
\begin{eqnarray*}
\nn\ddot\FF\uu &=& \ddot a/k+{a\tau}(B_3-\tau/k) \\
2a\bb\dot\FF\nn+\bb\ddot\FF\uu
&=& [\dot a\tau+a\dot\tau]/k+\dot a(\tau/k-B_3) \\
2a\cc\dot\FF\nn+\cc\ddot\FF\uu
&=& {a\tau}(\sigma/k-B_1)+\dot a B_2.
\end{eqnarray*}
{The} last of these relations yields $B_1$, since $B_2$ is already known.
\end{proof}

\section{Conclusions}\label{sec8}

The conception of rotational motion as a parameterised set of motions is strongly rooted in absolute simultaneity. Three alternatives have been proposed, corresponding to somewhat different setups, leading to rotational or more-general accelerated motions. In all cases, the event manifold $\cale$ is Minkowski space $M_4$.
The interpretation of rotation transformations as global transformations from Minkowski space to a single, possibly curved manifold leads to insuperable difficulties. We, therefore, suggested that the local observer at every point of the trajectory of an object in uniform motion is associated with a different manifold, the collection of which forms an observer manifold. In the present examples, the local manifold of an observer is a pseudo-inertial system in the sense of C-equivalence.

Axioms for the observer manifold were compared and were shown to include, as special cases, manifolds in the usual sense and frame and fibre bundles over the event manifold. They are not fibre bundles, because not every event has a fibre over it, and one point may have several fibres if several observers in different states of motion happen to meet at a single event. For this reason, fairly general axiomatics were proposed, so as to separate clearly the mathematical issue of the classification of observer manifolds from their physical interpretation in special cases.

The first example is Franklin's rotation transformation, rediscovered by Trocheris and, later, by Takeno. It should be understood as defining a parameterised family of tetrads in cylindrical coordinates, not as a diffeomorphism, let alone a change of coordinates. Its usefulness in pulsar physics has already been established. The observer manifold consists of a collection of Minkowski spaces, one for each point of the trajectory, defined by distinguished tetrads.

The second example is the motion of an electron with spin. Here, the transfer maps are Lorentz boosts, and the observers are determined by their time-like tangents. Thomas precession is a consequence of the fact that the observer manifold cannot be reduced to the event manifold because the transfer maps do not form a group. Indeed, the composition of two Lorentz boosts is not a boost in general. Fermi--Walker transport is recovered as the infinitesimal form of transfer maps between observers.

The third example is the motion of a charged particle in a constant electromagnetic field. While this also leads to the Franklin transformation in the case of motion in a plane, under the action of a pure magnetic field orthogonal to it, it leads in general to a more-complicated, but still completely explicit splitting of Minkowski space, adapted to the geometric structure of the Frenet--Serret tetrad. The transfer maps are Lorentz transformations between these tetrads.

Thus, the observer manifold is not only made necessary by the difficulties in the representation of uniform rotation, it provides a flexible mathematical framework for the interpretation of measurements by different observers, in a form both suitable for calculation and immediately consistent with the physical requirements of Relativity. It seems to provide a mathematical formulation for the views of Einstein.

Perspectives include (i) a discussion of further examples in which the manifolds attached to different observers are curved; (ii) the exploration of further axiomatics; (iii)~the classification of observer manifolds, including the possible transport rules obtainable as infinitesimal versions of transfer maps; (iv) the extension to variable electromagnetic fields and non-electromagnetic fields, including gravitational fields. For the latter, the introduction of a conformal factor, in the spirit of C-equivalence, is the obvious procedure. Thus, the wider mathematical issue is the classification of observer manifolds on the basis of the structure of their transfer maps and the comparison of different possible axiomatics, for which this study provided a first set of examples.

\textbf{Acknowledgement.} The author thanks Jan Peng at MDPI for suggesting to prepare and submit this paper, and the referees for their kind comments.

\end{document}